\documentclass[10pt, a4paper]{article}

\usepackage{amsthm,amssymb,mathrsfs,amsmath}
\usepackage{float}
\usepackage{color}
\usepackage[top=1in, bottom=1in, left=1in, right=1in]{geometry}
\usepackage{epsfig}

\bibliographystyle{plain}
\newtheorem{theorem}{Theorem}[section]

\newtheorem{remark}{Remark}[section]
\newtheorem{definition}{Definition}[section]


\begin{document}
\title{On some computational aspects of Hermite wavelets on a class of SBVPs arising in exothermic reactions }
\author{Amit K. Verma$^a$\thanks{Email:$^a$akverma@iitp.ac.in,$^b$dikshatiwari227@gmail.com}, Diksha Tiwari$^b$
\\\small{\textit{$^{a}$Department of Mathematics, IIT Patna, Patna $801106$, Bihar, India.}}\\\small{\textit{$^b$Faculty of Mathematics, University of Vienna, Austria.}}}

\maketitle

\begin{abstract}
We propose a new class of SBVPs which deals with exothermic reactions. We also propose four computationally stable methods to solve singular nonlinear BVPs by using Hermite wavelet collocation which are coupled with Newton's quasilinearization and Newton-Raphson method. We compare the results obtained with Hermite Wavelets with Haar wavelet collocation. The efficiency of these methods are verified by applying these four methods on Lane-Emden equations. Convergence analysis is also presented.
\end{abstract}
\textbf{Keywords:} \small{MRA; Quasilinearization; Newton Raphson; Haar Wavelets; Hermite Wavelets; Nonlinear Singular Boundary value problems,}
\section{Introduction}
This paper deals with Wavelets and nonlinear singular BVPs. Nonlinear BVPs are difficult to deal and if singularity is also present it becomes even more difficult. It is not easy to capture the behavior of the solutions near the point of singularity. If we apply suitable boundary conditions which forces the existence of unique continuous solutions and hence there is possibility of finding these solutions via numerical methods. Still since the coefficient blow up when near the singularity discretizing the differential equation is a challenge. Wavelets help us to treat this complicated situation in an easy way with less number of spatial points. To address both nonlinear BVPs and Wavelets we divide the introduction in two parts.
\subsection*{Nonlinear SBVPs arising in Exothermic Reactions}\label{P2_sec1} Here we propose a new class of nonlinear SBVP. For that let us consider the mathematical equation which governs, balance between heat generated and conducted away
\begin{equation}\label{P2_CHEM1}
\lambda\nabla^{2}T=-QW,
\end{equation}
where $T$ is gas temperature, $Q$ the heat of the reaction, $\lambda$ the thermal conductivity, $W$ the reaction velocity and $\nabla^{2}$ the Laplacian operator.

Chambre \cite{CHAMBRE1952} assumed that reaction is mono-molecular and velocity follows the Arrhenius law, given as
\begin{equation}\label{P2_CHEM2}
W=A\exp\left(\frac{-E}{RT}\right),
\end{equation}
and after some approximations and symmetry assumptions Chambre \cite{CHAMBRE1952} arrived at the following equation
\begin{equation}\label{P2_CHEM3}
Ly=-\delta \exp(t)
\end{equation}
where, $$L\equiv\frac{d^2}{dt^2}+\frac{k_g}{t}\frac{d}{dt},$$ $k_g$ depends on shape and size of the vessel and $\delta$ is a parameter.

Nakamura et al. \cite{NAKAMURA1989295}, while looking for an equation which can express the temperature dependence of the rate constant proposed the following
\begin{equation}\label{P2_CHEM4}
W=A\exp\left(\frac{-E}{R~\left(T_{0}^n+T^{n}\right)^{1/n}}\right),\quad n=1,2,3,\cdots
\end{equation}
where $R$ is a gas constant, $T$ is absolute temperature, $A$, $E_{0}$ and $T_{0}$ are parameters and $n$ is an integer.

Similar to analysis of Chambre \cite{CHAMBRE1952}, we arrive at the following differential equation
\begin{equation}\label{P2_CHEM5}
Ly=-B\exp\left(\frac{-A}{\left(c^n+y^{n}\right)^{1/n}}\right).
\end{equation}
There are several other examples which led us to consider the following class of nonlinear singular boundary value problem (SBVPs)
\begin{equation}\label{P2_1}
Ly+{f(t,y(t))}=0,\quad\quad\quad 0<t\le 1,
\end{equation}
subject to following the boundary conditions
\begin{subequations}
\begin{eqnarray}
\label{P2_1a}&& Case~(i)\quad ~~y'(0)=\alpha,\quad y(1)=\beta,\\
\label{P2_1b}&& Case~(ii)\quad ~y(0)=\alpha,\quad y(1)=\beta,\\
\label{P2_1c}&& Case~(iii)\quad y'(0)=\alpha,\quad ay(1)+by'(1)=\beta,
\end{eqnarray}
\end{subequations}
where $a,b,c$, $\alpha$, $\beta$ are real constants and ${f\left(t,y(t)\right)}$ is a real valued function. Boundary conditions at the singular end $t=0$ depend on the value of $k_g$.

There is huge literature on existence of solutions of such BVPs. Please refer \cite{PANDEY2008NARWA},  \cite{PANDEY2008JMAA}, \cite{PANDEY2009NATMA} and \cite{VERMA2011NATMA} and the references there in. Several numerical methods have also been proposed for solving these type of  non-linear singular boundary value problems (see \cite{Singh2016,MSAKVRP2019,VermaKayenat2018} and its references).

In \cite{RC2013} non linear singular Lane-Emden IVPs are solved with Haar Wavelet Quasilinearization approach, and nonlinearity is easily handled with quasilinearization. In \cite{SHI2017} Hermite wavelets operational matrix method is used to solve second order nonlinear singular initial value problems. In \cite{KAZE2013} Chebyshev wavelets operation matrices are used for solving nonlinear singular boundary value problems. In \cite{Zhou2016} method based on Laguerre wavelets is used to solve nonlinear singular boundary value problems. In \cite{MOH2011} method based on Legendre wavelet is proposed to solve singular boundary value problems. All these based on wavelets show high accuracy. In \cite{akvdt2018,rsg2019,rsgjshgag2019} Haar wavelets are used to solve SBVPs efficiently for higher resolution.

In this article we solve SBVP of type \eqref{P2_1} subject to boundary conditions \eqref{P2_1a},\eqref{P2_1b},\eqref{P2_1c} with help of Hermite Wavelet Newton Approach (HeWNA), Hermite Wavelet Quasilinearization Approach (HeWQA), Haar wavelet Newton Approach (HWNA) and Haar Wavelet Quasilinearization Approach (HWQA) and compare results to show accuracy of the method. Convergence of HeWNA method is also established. Novelty of this paper is that Newton Raphson has not been coupled with Haar wavelets and Hermite has not been used to solve Nonlinear Singular BVPs till now.

Some recent works on wavelets and their applications can be find in \cite{Hariharan2019,Assari2019,SCSS2019,HKMAST2019} and the references there in.

This paper is organized in the following manner. In section \ref{mra} we discuss MRA and Hermite wavelet is defined, in sub section \ref{P2_sec3} Haar wavelet is defined, in section \ref{P2_sec4} method of solution based on HeWQA, HeWNA, HWQA  and HWNA are proposed. In section \ref{P2_sec5}, convergence analysis of HWNA method is done and in the last section some numerical examples are presented to show accuracy of the method.
\begin{section}{Hermite and Haar Wavelet}\label{mra}
This section starts with MRA and definition of Hermite Wavelets.
\subsection{Wavelets and MRA}
The theory of Wavelets are developed by mathematicians as well engineers over the years. Wavelet word find its origin the name of French geophysicist Jean Morlet (1931--2007). He used wavelet to describe certain functions. Morlet and Croatian-French physicist Alex Grossman developed theory further which is used now a days \cite[p. 222]{MCPLAW2012}. Main issue that was being addressed in the process, was to overcome drawbacks of Fourier transforms. Wavelets are multi-indexed and it contains parameters which can be used to shift or dilate/contract the functions giving us basis functions. Thus computationally they are complex but they have better control and much better results are obtained at low resolution, i.e., less number of divisions are needed that we need in finite difference and all methods based on similar concepts. Here we consider methods based on wavelet transforms.

The following properties of wavelets enable us to choose them over other methods:

\begin{itemize}
\item Orthogonality
\item Compact Support
\item Density
\item Multiresolution Analysis (MRA)
\end{itemize}
\subsubsection{MRA}
Pereyra et al. \cite{MCPLAW2012} observe that an orthogonal multiresolution analysis (MRA) is a collection of closed subspaces of $L^2(\mathbb{R})$ which are nested, having trivial intersection, they exhaust the space, the subspaces are connected to each other by scaling property and finally there is a special function, the scaling function $\varphi$, whose integer translates form an orthonormal basis for
one of the subspaces. We give formal statement of MRA as defined in \cite{MCPLAW2012}.
\begin{definition}
An MRA with scaling function $\varphi$ is a collection of closed subspaces $V_j , j = \dots,—2,—1,0,1,2,\dots$ of $L^2(\mathbb{R})$ such that
\begin{enumerate}
\item $\mathbf{V}_{j} \subset \mathbf{V}_{j+1}$
\item $\overline{\bigcup\mathbf{V}_{j}}=\mathbf{L^2(\mathbb{R})}$
\item $ \bigcap \mathbf{V}_{j}={0}$
\item The function $f(x)$ belongs to $\mathbf{V}_{j}$ if and only if the function $f(2x) \in \mathbf{V}_{j+1}$.\
\item The function $\varphi$ belongs to $\mathbf{V}_{0}$, the set $\{\varphi(x-k), k \in \mathbf{\mathbb{Z}}\}$ is orthonormal basis for $\mathbf{V}_{0}$.
 \end{enumerate}
\end{definition}
The sequence of wavelet subspaces $W_j$ of $L^2(\mathbb{R})$, are such that $V_j\perp Wj$ , for all $j$ and $V_{j+1} = V_j\oplus W_j$. Closure of $\oplus_{j\in \mathbb{Z}} W_j$ is dense in $L^2(\mathbb{R})$ with respect to $L^2$ norm.

Now we state Mallat's theorem \cite{MALLAT1989} which guarantees that in presence of an orthogonal MRA, an orthonormal basis for $L^2(\mathbb{R})$ exists. These basis functions are fundamental functions in the theory of wavelets which helps us to develop advanced computational techniques.
\begin{theorem} {$\mathrm{(Mallat's~Theorem)}$}. Given an orthogonal MRA
with scaling function $\varphi$, there is a wavelet $\psi\in L^2(\mathbb{R})$ such that for
each $j\in \mathbb{Z}$, the family $\{\psi_{j,k}\}_{k\in\mathbb{Z}}$ is an orthonormal basis for $W_j$ .
Hence the family $\{\psi_{j,k}\}_{k\in\mathbb{Z}}$ is an orthonormal basis for $L^2(\mathbb{R})$. \label{Mallat}
\end{theorem}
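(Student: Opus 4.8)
The plan is to construct the wavelet $\psi$ explicitly from the scaling function $\varphi$ via a two-scale (refinement) relation, and then to verify orthonormality and completeness on the Fourier side. First I would exploit the nesting axiom $\mathbf{V}_0\subset\mathbf{V}_1$ together with axiom 5: since $\{\varphi_{1,k}\}_{k\in\mathbb{Z}}$, where $\varphi_{1,k}(x)=\sqrt{2}\,\varphi(2x-k)$, is an orthonormal basis for $\mathbf{V}_1$ and $\varphi\in\mathbf{V}_0\subset\mathbf{V}_1$, we may expand $\varphi(x)=\sum_k h_k\,\varphi_{1,k}(x)$ with $h_k=\langle\varphi,\varphi_{1,k}\rangle$. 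Taking Fourier transforms turns this into the dilation identity $\hat\varphi(\xi)=m_0(\xi/2)\,\hat\varphi(\xi/2)$, where $m_0(\xi)=\tfrac{1}{\sqrt{2}}\sum_k h_k e^{-ik\xi}$ is the $2\pi$-periodic low-pass filter.

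Next I would record the Fourier-side form of orthonormality. By the Poisson summation formula, orthonormality of $\{\varphi(\cdot-k)\}$ is equivalent to $\sum_k|\hat\varphi(\xi+2\pi k)|^2=\text{const}$, and substituting the dilation identity yields the quadrature condition $|m_0(\xi)|^2+|m_0(\xi+\pi)|^2=1$. I would then define the candidate wavelet through its high-pass filter $m_1(\xi)=e^{-i\xi}\,\overline{m_0(\xi+\pi)}$, equivalently $\psi(x)=\sum_k g_k\,\varphi_{1,k}(x)$ with the standard conjugate coefficients $g_k=(-1)^k\,\overline{h_{1-k}}$, and set $\hat\psi(\xi)=m_1(\xi/2)\,\hat\varphi(\xi/2)$.

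The core of the argument is to show that the $2\times 2$ \emph{modulation matrix}
\[
M(\xi)=\begin{pmatrix} m_0(\xi) & m_0(\xi+\pi)\\ m_1(\xi) & m_1(\xi+\pi)\end{pmatrix}
\]
is unitary for almost every $\xi$. The diagonal entries of $M(\xi)M(\xi)^{*}$ reproduce the quadrature condition, which (again via Poisson summation) gives orthonormality of $\{\psi(\cdot-k)\}$, while the off-diagonal relation $m_0(\xi)\overline{m_1(\xi)}+m_0(\xi+\pi)\overline{m_1(\xi+\pi)}=0$ encodes $\mathbf{W}_0\perp\mathbf{V}_0$, i.e. $\langle\psi(\cdot-k),\varphi(\cdot-\ell)\rangle=0$. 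The hardest step, and the one I would dwell on, is \emph{completeness}: proving that the translates of $\varphi$ and $\psi$ together span all of $\mathbf{V}_1$, so that $\mathbf{V}_1=\mathbf{V}_0\oplus\mathbf{W}_0$ with $\mathbf{W}_0=\overline{\mathrm{span}}\{\psi(\cdot-k)\}$. This is exactly where unitarity of $M(\xi)$ enters in the surjective direction: any $f\in\mathbf{V}_1$ satisfies $\hat f(\xi)=\mu(\xi/2)\,\hat\varphi(\xi/2)$ for some $2\pi$-periodic $\mu$, and invertibility of $M$ lets one split $\mu$ uniquely into a low-pass and a high-pass part, exhibiting $f$ as a sum of an element of $\mathbf{V}_0$ and one of $\mathbf{W}_0$.

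Finally I would propagate the result across scales and assemble the global basis. Applying the dilation axiom 4 repeatedly, the relation $\mathbf{V}_1=\mathbf{V}_0\oplus\mathbf{W}_0$ rescales to $\mathbf{V}_{j+1}=\mathbf{V}_j\oplus\mathbf{W}_j$, with $\{\psi_{j,k}\}_{k\in\mathbb{Z}}$, where $\psi_{j,k}(x)=2^{j/2}\psi(2^j x-k)$, an orthonormal basis for each $\mathbf{W}_j$. Iterating telescopes the $\mathbf{V}_j$ into direct sums of the $\mathbf{W}_j$, and the MRA axioms that the $\mathbf{V}_j$ are dense in $L^2(\mathbb{R})$ (axiom 2) with trivial intersection (axiom 3) force $L^2(\mathbb{R})=\bigoplus_{j\in\mathbb{Z}}\mathbf{W}_j$. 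Since the $\mathbf{W}_j$ are mutually orthogonal and each carries the orthonormal basis $\{\psi_{j,k}\}_{k\in\mathbb{Z}}$, the combined family $\{\psi_{j,k}\}_{(j,k)\in\mathbb{Z}^2}$ is an orthonormal basis for $L^2(\mathbb{R})$, which is the assertion.
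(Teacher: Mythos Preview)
Your outline is a faithful sketch of the classical construction of Mallat's wavelet from an orthogonal MRA: you derive the two-scale relation, introduce the low- and high-pass filters $m_0,m_1$, verify orthonormality and the splitting $\mathbf{V}_1=\mathbf{V}_0\oplus\mathbf{W}_0$ through unitarity of the modulation matrix, and then propagate across scales using the MRA axioms. The argument is standard and correct at the level of an outline.

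However, there is nothing in the paper to compare it against. The paper does not prove Theorem~\ref{Mallat}; it merely states it as background, attributing it to Mallat~\cite{MALLAT1989}, and immediately moves on to define Hermite wavelets. So while your proposal is a reasonable proof sketch of a well-known result, the authors never intended to supply a proof here, and any comparison of approaches is moot.
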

\subsection{Hermite Wavelet (\cite{Saeed2014})}\label{P2_sec2}
Hermite Polynomials are defined on the interval $(-\infty,\infty)$ and can be defined with help of the recurrence formula:
\begin{eqnarray*}
&&H_{0}(t)=1\\
&&H_{1}(t)=2t\\
&&H_{m+1}(t)= 2tH_{m}(t)-2mH_{m-1}(t),~m=1,2,3,\cdots.
\end{eqnarray*}
Completeness and orthogonality (with respect to weight function $e^{-t^2}$) of Hermite polynomials enable us to treat them as wavelet (Theorem \ref{Mallat}).

Hermite wavelet on the interval $[0,1]$ are defined as
\begin{equation}
\label{P2_Def1}
\psi_{n,m}(t)=
2^{k/2}\frac{1}{\sqrt{n!2^{n}\sqrt{\pi}}}H_{m}(2^kt-\hat{n})\chi_{\left[\frac{\hat{n}-1}{2^k},\frac{\hat{n}+1}{2^k}\right)}
\end{equation}
where $k=1,2,\hdots$ is level of resolution, $n=1,2,\hdots, 2^{k-1},\hspace{0.25cm} \hat{n}=2n-1$ is translation parameter, $m=1,2,\hdots,M-1$ is order of Hermite polynomial.
\end{section}
\subsubsection{Approximation of Function with Hermite Wavelet}\label{P2_subsec2}
A function $f(t)$ defined on $L^2[0,1]$ can be approximated with Hermite wavelet in the following manner
\begin{equation}\label{P2_2}
f(t)= \sum^{\infty}_{n=1}\sum^{\infty}_{m=0}c_{nm}\psi_{nm}(t).
\end{equation}
For computation purpose we truncate \eqref{P2_2} and define,
\begin{equation}\label{P2_3}
f(t)\simeq \sum^{2^{k}-1}_{n=1}\sum^{M-1}_{m=0}c_{nm}\psi_{nm}(t)=c^{T}\psi(t)
\end{equation}
where  $\psi(t)$ is $2^{k-1}M\times 1$ matrix given as:
\begin{equation*}
\psi(t)= \left[\psi_{1,0}(t),\dots,\psi_{1,M-1}(t),\psi_{2,0}(t),\dots,\psi_{2,M-1}(t),\dots,\psi_{2^{k-1},0}(t),\dots,\psi_{2^{k-1},M-1}(t)\right]^T
\end{equation*}
$c$ is $2^{k-1}M\times 1$ matrix. Entries of $c$ can be computed as
\begin{equation}\label{P2_4}
c_{ij}=\int^{1}_{0}f(t)\psi_{ij}(t)dt
\end{equation}
with $i=1,2,\dots,2^k-1$ and $j=0,1,\dots,M-1$. Here $M$ is degree of Hermite polynomial.
\subsubsection{Integration of Hermite Wavelet}
As suggested in \cite{Gupta2015}, $\nu$-th order integration of $\psi(t)$ can also be approximated as
\begin{multline*}\label{P2_5}
\int^{t}_{0}\int^{t}_{0}\dots\int^{t}_{0}\psi(\tau)d\tau\simeq\\
\left[J^\nu\psi_{1,0}(t),\dots,J^\nu\psi_{1,M-1}(t),J^\nu\psi_{2,0}(t),\dots,J^\nu\psi_{2,M-1}(t),\dots,J^\nu\psi_{2^{k-1},0}(t),\dots,J^\nu\psi_{2^{k-1},M-1}(t)\right]^T
\end{multline*}
where
\begin{equation}
J^{\nu}\psi_{n,m}(t)= 2^{k/2}\frac{1}{\sqrt{n!2^{n}\sqrt{\pi}}}J^{\nu}H_{m}(2^kt-\hat{n})\chi_{\left[\frac{\hat{n}-1}{2^k},\frac{\hat{n}+1}{2^k}\right)},
\end{equation}
where $k=1,2,\hdots$ is level of resolution, $n=1,2,\hdots, 2^{k-1},\hspace{0.25cm} \hat{n}=2n-1$ is translation parameter, $m=1,2,\hdots,M-1$ is order of Hermite polynomial.
\begin{remark} Integral operator $J^{\nu}(\nu>0)$ of a function $f(t)$ is defined as
\begin{equation*}\label{P2_6}
J^{\nu}f(t)= \frac{1}{\nu!}\int_{0}^{t}(t-s)^{\nu-1}f(s)ds.
\end{equation*}
\end{remark}
\subsubsection{Hermite Wavelet Collocation Method}\label{P2_subsec3}
To apply Hermite wavelet on the ordinary differential equations, we need its discretized form. We use collocation method for discretization where mesh points are given by
\begin{eqnarray}
\label{P2_19}\bar x_{l} = l \Delta x,\quad\quad\quad l = 0,1,\cdots,M-1.
\end{eqnarray}
For the collocation points we define
\begin{eqnarray}
\label{P2_20}x_{l} = 0.5(\bar x_{l-1} + \bar x_{l}), \quad\quad\quad l = 0,1,\cdots,M-1.
\end{eqnarray}
For $k=1$, equation \eqref{P2_3} takes the form
\begin{equation}\label{P2_21}
f(t)\simeq \sum^{M-1}_{m=0}c_{1m}\psi_{1m}(t).
\end{equation}
We replace $t$ by $x_{l}$ in above equation and arrive at system of equations which can easily be solved to get the solution of the nonlinear SBVP.
\subsection{Haar Wavelet}\label{P2_sec3}
Let us assume that $x$ belongs to any interval $[P, Q]$, where $P$ and $Q$ are constant end points. Let us define $M = 2^{J}$, where $J$ is the maximal level of resolution. Devide $[P, Q]$ into $2M$ subintervals of equal length $\Delta x = (Q- P)/(2M)$. The wavelet number $i$ is defined as $i = m + k + 1$, where $j = 0, 1,\cdots,J$ and $k = 0, 1,\cdots,m - 1$ (here $m = 2j$).
The $i^{th}$ Haar wavelet is explained as
\begin{equation}
\label{P2_7}h_{i}(x)= \chi_{\left[{\eta }_{1}\left(i\right),{\eta }_{2}\left(i\right)\right)}-\chi_{\left[{\eta }_{2}\left(i\right),{\eta }_{3}\left(i\right)\right)}
\end{equation}
where
\begin{eqnarray}
\label{P2_8}\eta_{1} (i) = P + 2k\mu\Delta x,\quad
\eta_{2}(i) = P + (2k + 1)\mu\Delta x,\quad
\eta_{3}(i) = P + 2(k + 1)\mu\Delta x,\quad
\mu= M/m.
\end{eqnarray}
Above equations are valid for $i > 2$. For $i = 1$ case we have, $h_{i}(x) = \chi_{[P,Q]}$.

For $i = 2$ we have
\begin{eqnarray}
\label{P2_9}\eta_{1}(2) = P,\quad
\eta_{2}(2) = 0.5(2P + Q),\quad
\eta_{3}(2) = Q.
\end{eqnarray}
The thickness of the $i^{th}$ wavelet is
\begin{eqnarray}
\eta_{3}(i)- \eta_{1}(i) = 2\mu\Delta x = (Q- P)m^{-1} = (Q- P)2^{-j}.
\end{eqnarray}
If $J$ is fixed then by \eqref{P2_7}
\begin{eqnarray}
\label{P2_10}\int_{P}^{Q} h_{i}(x)h_{l}(x)dx=
\begin{cases}
(Q-P)2^{-j}, & l = i, \\
0, & l\neq i.
\end{cases}
\end{eqnarray}
The integrals $p_{\upsilon,i}(x)$ are defined as
\begin{eqnarray}
\label{P2_11} p_{\upsilon,i}(x)=\int_{P}^{x}\int_{P}^{x}\cdots\int_{P}^{x}h_{i}(t)dt^{v}=\frac{1}{(v-1)!}\int_{P}^{x}(x- t)^{(\upsilon-1)}h_{i}(t)dt,
\end{eqnarray}
where $\upsilon = 1,2,\cdots,n,\quad i = 1,2,\cdots,2M.$

Putting all values in the integral we get
\begin{multline}\label{P2_12}
p_{\upsilon , i}(x)=
\frac{1}{\upsilon!}[x-\eta_{1}(i)]^\upsilon\chi_{\left[{\eta}_{1}\left(i\right),{\eta}_{2}\left(i\right)\right)}+ \frac{1}{\upsilon!}\{[x-\eta_{1}(i)]^\upsilon-2[x-\eta_{2}(i)]^\upsilon\} \chi_{\left[{\eta }_{2}\left(i\right),{\eta }_{3}\left(i\right)\right]}\\
+\frac{1}{\upsilon!}\{[x-\eta_{1}(i)]^\upsilon-2[x-\eta_{2}(i)]^\upsilon+[x-\eta_{3}(i)]^\upsilon\}\chi_{(\eta_{3}(i),\infty)}
\end{multline}
for $i>1$ and for $i=1$ we have $\eta_{1}=P, \eta_{2} = \eta_{3}=Q$ and
\begin{equation}
\label{P2_13}p_{\upsilon,1}(x)=\frac{1}{\upsilon!}(x-P)^\upsilon.
\end{equation}

\subsubsection{Haar Wavelet Collocation Method}\label{P2_subsec3b}
Similar to case of previous section here again we define collocation points as follows
\begin{eqnarray}
&&\label{P2_14}\bar x_{t} = P + t \Delta x,\quad\quad\quad~ t = 0,1,\cdots,2M,\\
&&\label{P2_15}x_{t} = 0.5(\bar x_{t-1} + \bar x_{t}), \quad t = 0,1,\cdots,2M,
\end{eqnarray}
and replace $x \to x_{t}$ in \eqref{P2_7},\eqref{P2_8},\eqref{P2_9}. We define the Haar matrices $H, P_{1},P_{2},\cdots, P_\upsilon$ which are $2M \times 2M$ matrices. Entries of matrices are given by $H(i,t)=h_{i}(x_{t})$, $P_{v}(i,t)=P_{\upsilon, i}(x_{t})$, $\upsilon = 1,2,\cdots$.
Consider $P=0, Q=1,J=1$. Then $2M=4$, so $H,P_{1},P_{2}$ are defined as
\begin{eqnarray}
\label{P2_16}H=\left[
\begin{array}{cccc}
1& 1& 1& 1\\
1& 1& -1& -1\\
1& -1& 0& 0\\
0& 0& 1& -1
\end{array}\right],
\quad P_{1}=\frac{1}{8}
\left[
\begin{array}{cccc}
1& 3& 5& 7\\
1& 3& 3& 1\\
1& 1& 0& 0\\
0& 0& 1& 1
\end{array}\right],
\quad P_{2}=\frac{1}{128}\left[
\begin{array}{cccc}
1& 9& 25& 49\\
1& 9& 23& 31\\
1& 7& 8& 8\\
0& 0& 1& 7
\end{array}\right].
\end{eqnarray}
\subsubsection{Approximation of Function with Haar Wavelet}\label{P2_subsec3c}
A function $f(t)$ defined on $L^2[0,1]$ can be approximated by Haar wavelet basis in the following manner
\begin{equation}\label{P2_17}
f(t)= \sum^{\infty}_{i=0}a_{i}h_{i}(t).
\end{equation}
For computation purpose we truncate \eqref{P2_17} and define
\begin{equation}\label{P2_18}
f(t)\simeq \sum^{2M}_{i=0}a_{i}h_{i}(t),
\end{equation}
where $M$ is level of resolution.
\section{Numerical Methods Based on Hermite and Haar Wavelets}\label{P2_sec4}
In this section, solution methods based on Hermite wavelet and Haar wavelet are presented.
\subsection{Hermite Wavelet Quasilinearization Approach (HeWQA)}\label{P2_subsec4a}
In HeWQA we use quasilinearization to linearize SBVP then method of collocation for discretization and use Hermite wavelets for computation of solutions to nonlinear SBVP. We consider differential equation \eqref{P2_1} with boundary conditions \eqref{P2_1a}, \eqref{P2_1b} and \eqref{P2_1c}. Quasilinearizing this equation, we get the form
\begin{subequations}
\begin{eqnarray}
\label{P2_22a}Ly_{r+1}=-f(t,y_{r}(t))+\sum_{s=0}^{1}(y_{r+1}^{s}-y_{r}^{s})(-f_{y^{s}}(t,y_{r}(t)),
\end{eqnarray}
subject to linearized boundary conditions,
\begin{eqnarray}
\label{P2_22b}&&y'_{r+1}(0)=y_{r}(0),\quad{y_{r+1}(1)}={y_{r}(1)},\\
\label{P2_22c}&&y_{r+1}(0)=y_{r}(0),\quad{y_{r+1}(1)}={y_{r}(1)},\\
\label{P2_22d}&&y'_{r+1}(0)=y_{r}(0),\quad a{y_{r+1}(1)}+by'_{r+1}(1)=a{y_{r}(1)}+by'_{r}(1).
\end{eqnarray}
Here $s=0,1$, $f_{y^{s}}=\partial{f}/\partial{y^{s}}$ and $y_{r}^{0}(t)=y_{r}(t)$.

Thus we arrive at linearized form of given differential equation. Now we use Hermite wavelet method similar to described in \cite{CH1997}. Let us assume
\begin{eqnarray}
\label{P2_22e}y''_{r+1}(t)=\sum^{M-1}_{m=0}c_{1m}\psi_{1m}(t).
\end{eqnarray}
Then integrating twice we get the following two equations:
\begin{eqnarray}
&&\label{P2_22f}y'_{r+1}(t)=\sum^{M-1}_{m=0}c_{1m}J\psi_{1m}(t)+y'_{r+1}(0),\\
&&\label{P2_22g}y_{r+1}(t)=\sum^{M-1}_{m=0}c_{1m}J^2\psi_{1m}(t)+ty'_{r+1}(0)+y_{r+1}(0).
\end{eqnarray}
Here $J^{\nu}(\nu>0)$ is the integral operator defined previously.
\end{subequations}
\subsubsection{Treatement of the Boundary Value Problem}\label{P2_subsubsec4a}
Based on boundary conditions we will consider different cases and follow procedure similar to described in \cite{LU2008}.
\textbf{Case (i)}:\quad In equation \eqref{P2_1a} we have 	
$y'(0)=\alpha,y(1)=\beta$. So by linearization we have $y'_{r+1}(0)=\alpha,y_{r+1}(1)=\beta$. Now put $t=1$ in \eqref{P2_22g} we get
\begin{equation*}\label{P2_23}
y_{r+1}(1)=\sum^{M-1}_{m=0}c_{1m}J^2\psi_{1m}(1)+y'_{r+1}(0)+y_{r+1}(0),
\end{equation*}
so
\begin{equation}
\label{P2_24}y_{r+1}(0)=y_{r+1}(1)-\sum^{M-1}_{m=0}c_{1m}J^2\psi_{1m}(1)-y'_{r+1}(0).
\end{equation}
By using equation \eqref{P2_24} in \eqref{P2_22g} and simplifying we get
\begin{equation}\label{P2_27}
y_{r+1}(t)=(t-1)y'_{r+1}(0)+y_{r+1}(1)+\sum^{M-1}_{m=0}c_{1m}(J^2\psi_{1m}(t)-J^2\psi_{1m}(1)).
\end{equation}
Now putting values of $y'_{r+1}(0)$ and $y_{r+1}(1)$ in \eqref{P2_22f} and \eqref{P2_27} we get
\begin{eqnarray}
&&y'_{r+1}(t)=\alpha +\sum^{M-1}_{m=0}c_{1m}J\psi_{1m}(t),\\
&&\label{P2_28}
y_{r+1}(t)=(t-1)\alpha + \beta +\sum^{M-1}_{m=0}c_{1m}(J^2\psi_{1m}(t)-J^2\psi_{1m}(1)).
\end{eqnarray}
\textbf{Case (ii)}:\quad In equation \eqref{P2_1b} we have 	
$y(0)=\alpha,y(1)=\beta$. So by linearization we have $y_{r+1}(0)=\alpha,y_{r+1}(1)=\beta$. Now put $t=1$ in equation \eqref{P2_22g} we get
\begin{eqnarray}\label{P2_29}
y_{r+1}(1)=\sum^{M-1}_{m=0}c_{1m}J^2\psi_{1m}(1)+y'_{r+1}(0)+y_{r+1}(0),
\end{eqnarray}
so
\begin{eqnarray}\label{P2_30}
y'_{r+1}(0)=y_{r+1}(1)-\sum^{M-1}_{m=0}c_{1m}J^2\psi_{1m}(1)-y_{r+1}(0).
\end{eqnarray}
By putting these values in equation \eqref{P2_22f} and \eqref{P2_22g} we get
\begin{eqnarray*}
y'_{r+1}(t)=y_{r+1}(1)-y_{r+1}(0)+\sum^{M-1}_{m=0}c_{1m}(J\psi_{1m}(t)-J^2\psi_{1m}(1)),
\end{eqnarray*}
and
\begin{eqnarray*}\label{P2_31}
y_{r+1}(t)=(1-t)y_{r+1}(0)+ty_{r+1}(1)+\sum^{M-1}_{m=0}c_{1m}(J^2\psi_{1m}(t)-J^2\psi_{1m}(1)).
\end{eqnarray*}
Now we put values of $y_{r+1}(0)$ and $y_{r+1}(1)$ we get
\begin{eqnarray}\label{P2_32}
&&y'_{r+1}(t)=(\beta-\alpha)+\sum^{M-1}_{m=0}c_{1m}(J\psi_{1m}(t)-J^2\psi_{1m}(1)),\\
&&\label{P2_33}
y_{r+1}(t)=(1-t)\alpha + t\beta +\sum^{M-1}_{m=0}c_{1m}J^2\psi_{1m}(1)-y_{r+1}(0)).
\end{eqnarray}
\textbf{Case (iii)}:\quad In equation \eqref{P2_1c} we have 	
$y'(0)=\alpha, ay(1)+by'(1)=\beta$. So by linearization we have $y'_{r+1}(0)=\alpha,ay_{r+1}(1)+by'_{r+1}(1)=\beta$. Now put $t=1$ in equation \eqref{P2_22f} and \eqref{P2_22g} we get
\begin{eqnarray}\label{P2_34}
&&y'_{r+1}(1)=\sum^{M-1}_{m=0}c_{1m}J\psi_{1m}(1)+y'_{r+1}(0),\\
\label{P2_35}&&y_{r+1}(1)=\sum^{M-1}_{m=0}c_{1m}J^2\psi_{1m}(1)+y'_{r+1}(0)+y_{r+1}(0).
\end{eqnarray}
Putting these values in $ay_{r+1}(1)+by'_{r+1}(1)=\beta$ and solving for $y_{r+1}(0)$ we have
\begin{eqnarray*}\label{P2_37}
y_{r+1}(0)=\frac{1}{a}\left(\beta - ay'_{r+1}(0)-a\sum^{M-1}_{m=0}c_{1m}J^2\psi_{1m}(1)-b\left(\sum^{M-1}_{m=0}c_{1m}J\psi_{1m}(1)+y'_{r+1}(0)\right)\right).
\end{eqnarray*}
Hence from equation \eqref{P2_22g} we have
\begin{multline}\label{P2_38}
y_{r+1}(t)=\sum^{M-1}_{m=0}c_{1m}J^2\psi_{1m}(t)+ty'_{r+1}(0)\\
+\frac{1}{a}\left(\beta -ay'_{r+1}(0)-a\sum^{M-1}_{m=0}c_{1m}J^2\psi_{1m}(1)-b\left(\sum^{M-1}_{m=0}c_{1m}J\psi_{1m}(1)+y'_{r+1}(0)\right)\right).
\end{multline}
Now we put values of $y_{r+1}(0)$ and $y_{r+1}(1)$ in equation \eqref{P2_22f} and \eqref{P2_38} we get
\begin{eqnarray}
&&y'_{r+1}(t)=\alpha +\sum^{M-1}_{m=0}c_{1m}J\psi_{1m}(t),\\
&&\label{P2_39}y_{r+1}(t)=\frac{\beta}{a}+\left(t-1-\frac{b}{a}\right)\alpha+\sum^{M-1}_{m=0}c_{1m}\left(J^2\psi_{1m}(t)-J^2\psi_{1m}(1)-\frac{b}{a}J\psi_{1m}(1)\right).
\end{eqnarray}
Finally we put values of $y''_{r+1}$, $y'_{r+1}$ and $y_{r+1}$ for all these cases in the linearized differential equation \eqref{P2_22a}. Now we discritize the final equation with collocation method and then solve the resulting system assuming initial guess $y_{0}(t)$. We get required value of solution $y(t)$ of the nonlinear SBVPs at different collocation points.
\subsection{Hermite Wavelet Newton Approach (HeWNA)}\label{P2_subsec4b}
In this approach we use the method of collocation for discretization and then Hermite wavelet for approximation of the solutions. Finally Newton Raphson method is used to solve the resulting nonlinear system of equations.

We consider differential equation \eqref{P2_1} with boundary conditions \eqref{P2_1a}, \eqref{P2_1b} and \eqref{P2_1c}. Now we assume
\begin{eqnarray}
\label{P2_70}y''(t)=\sum^{M-1}_{m=0}c_{1m}\psi_{1m}(t).
\end{eqnarray}
Integrating twice we get the following two equations:
\begin{eqnarray}
&&\label{P2_71}y'(t)=\sum^{M-1}_{m=0}c_{1m}J\psi_{1m}(t)+y'(0),\\
&&\label{P2_72}y(t)=\sum^{M-1}_{m=0}c_{1m}J^2\psi_{1m}(t)+ty'(0)+y(0).
\end{eqnarray}
\subsubsection{Treatment of the Boundary Value Problem}\label{P2_subsubsec4b}
Based on boundary conditions we divide it in different cases.\\
\textbf{Case (i)}: In equation \eqref{P2_1a} we have 	
$y'(0)=\alpha, y(1)=\beta$. Now put $t=1$ in \eqref{P2_72} we get
\begin{eqnarray}\label{P2_40}
y(1)=\sum^{M-1}_{m=0}c_{1m}J^2\psi_{1m}(1)+y'(0)+y(0),
\end{eqnarray}
so
\begin{eqnarray}\label{P2_41}
y(0)=y(1)-\sum^{M-1}_{m=0}c_{1m}J^2\psi_{1m}(1)-y'(0).
\end{eqnarray}
By using equation \eqref{P2_41} in \eqref{P2_72} we get
\begin{eqnarray*}
\label{P2_42} &&y(t)=\sum^{M-1}_{m=0}c_{1m}J^2\psi_{1m}(t)+ty'(0)+y(1)-\sum^{M-1}_{m=0}c_{1m}J^2\psi_{1m}(1)-y'(0),\\
\label{P2_43}
&&y(t)=\sum^{M-1}_{m=0}c_{1m}J^2\psi_{1m}(t)+(t-1)y'(0)+y(1)-\sum^{M-1}_{m=0}c_{1m}J^2\psi_{1m}(1).
\end{eqnarray*}
Hence
\begin{eqnarray}\label{P2_44}
y(t)=(t-1)y'(0)+y(1)+\sum^{M-1}_{m=0}c_{1m}(J^2\psi_{1m}(t)-J^2\psi_{1m}(1)).
\end{eqnarray}
Now putting values of $y'(0)$ and $y(1)$ in \eqref{P2_71} and \eqref{P2_44} we get
\begin{eqnarray}
&&y(t)=\sum^{M-1}_{m=0}c_{1m}J^2\psi_{1m}(t)+ty'(0)+y(0),\\
&&\label{P2_45} y(t)=(t-1)\alpha + \beta +\sum^{M-1}_{m=0}c_{1m}(J^2\psi_{1m}(t)-J^2\psi_{1m}(1)).
\end{eqnarray}
\textbf{Case (ii)}: In equation \eqref{P2_1b} we have 	
$y(0)=\alpha,y(1)=\beta$. Now put $t=1$ in equation \eqref{P2_72} we get
\begin{eqnarray}\label{P2_46}
y(1)=\sum^{M-1}_{m=0}c_{1m}J^2\psi_{1m}(1)+y'(0)+y(0),
\end{eqnarray}
so
\begin{eqnarray*}\label{P2_47}
y'(0)=y(1)-\sum^{M-1}_{m=0}c_{1m}J^2\psi_{1m}(1)-y(0).
\end{eqnarray*}
Now using these values of $y'(0)$ and $y(1)$ in \eqref{P2_71} and \eqref{P2_72} and solving we get
\begin{eqnarray}
&&y'(t)=(\beta-\alpha)+\sum^{M-1}_{m=0}c_{1m}(J\psi_{1m}(t)-J^2\psi_{1m}(1)),\\
&&\label{P2_48}
y(t)=(1-t)\alpha + t\beta +\sum^{M-1}_{m=0}c_{1m}(J^2\psi_{1m}(t)-J^2\psi_{1m}(1)).
\end{eqnarray}
\textbf{Case (iii)}: In equation \eqref{P2_1c} we have 	
$y'(0)=\alpha, ay(1)+by'(1)=\beta$. Now put $t=1$ in equation \eqref{P2_71} and \eqref{P2_72} we get
\begin{eqnarray}\label{P2_49}
&&y'(1)=\sum^{M-1}_{m=0}c_{1m}J\psi_{1m}(1)+y'(0),\\
&&\label{P2_50}y(1)=\sum^{M-1}_{m=0}c_{1m}J^2\psi_{1m}(1)+y'(0)+y(0).
\end{eqnarray}
Putting these values in $ ay(1)+by'(1)=\beta$ and solving we will get value of $y(0)$, now put $y(0)$ and $y'(0)$ in \eqref{P2_72} we have
\begin{eqnarray}\label{P2_51}
y(t)=\sum^{M-1}_{m=0}c_{1m}J^2\psi_{1m}(t)+t y'(0)+\frac{1}{a}\left(\beta - ay'(0)-a\sum^{M-1}_{m=0}c_{1m}J^2\psi_{1m}(1)-b\left(\sum^{M-1}_{m=0}c_{1m}J\psi_{1m}(1)+y'(0)\right)\right).
\end{eqnarray}
Now by putting $y'(0)=\alpha$ in \eqref{P2_71} and \eqref{P2_51}, we have
\begin{eqnarray}
&&y'(t)=\alpha +\sum^{M-1}_{m=0}c_{1m}J\psi_{1m}(t),\\
&&\label{P2_52}
y(t)=\frac{\beta}{a}+\left(t-1-\frac{b}{a}\right)\alpha+\sum^{M-1}_{m=0}c_{1m}\left(J^2\psi_{1m}(t)-J^2\psi_{1m}(1)-\frac{b}{a}J\psi_{1m}(1)\right).
\end{eqnarray}
Now we put values of $y(t)$, $y'(t)$ and $y''(t)$ in \eqref{P2_1}. We discretize final equation with collocation method and solve the resulting nonlinear system with Newton Raphson method for $c_{1m}, m=0,1,\dots,M-1$. By substituting the values of $c_{1m}, m=0,1,\dots,M-1$, we get value of the solution of $y(t)$ of nonlinear SBVPs at different collocation points.
\subsection{Haar Wavelet Quasilinearization Approach (HWQA)}\label{P2_subsec4c}
As explained for HeWQA, we will follow same procedure in HWQA method. Here we are using Haar Wavelet in place of Hermite Wavelet. We consider differential equation \eqref{P2_1} with boundary conditions \eqref{P2_1a}, \eqref{P2_1b} or \eqref{P2_1c}. Applying method of quasilinearization, as we did in HeWQA,  we have equation \eqref{P2_22a} with linearized boundary conditions \eqref{P2_22b},\eqref{P2_22c} and \eqref{P2_22d}.

Let us assume
\begin{eqnarray}\label{P2_53e}
y''_{r+1}(t)=\sum_{i=0}^{2M}a_{i}h_{i}(t),
\end{eqnarray}
where $a_{i}$ are the wavelet coefficients. Then integrating twice we get following two equations:
\begin{eqnarray}\label{P2_53f}
&&y'_{r+1}(t)=\sum_{i=0}^{2M}a_{i}p_{1,i}(t)+y'_{r+1}(0),\\
&&\label{P2_53g}y_{r+1}(t)=\sum_{i=0}^{2M}a_{i}p_{2,i}(t)+ty'_{r+1}(0)+y_{r+1}(0).
\end{eqnarray}
\subsubsection{Treatement of the Boundary Value Problem}\label{P2_subsubsec4c}
Expressions for different boundary conditions in HWQA method are given below.\\
\noindent \textbf{Case (i)}: In equation \eqref{P2_1a} we have 	
$y'(0)=\alpha, y(1)=\beta$. Following same procedure as HeWQA
we have
\begin{eqnarray}
&&y'_{r+1}(t)=\alpha+\sum_{i=0}^{2M}a_{i}p_{1,i}(t),\\
&&\label{P2_54}y_{r+1}(t)=(t-1)\alpha + \beta +\sum_{i=0}^{2M}a_{i}(p_{2,i}(t)-p_{2,i}(1)).
\end{eqnarray}
\noindent \textbf{Case (ii)}: In equation \eqref{P2_1b} we have $y(0)=\alpha,y(1)=\beta$. So we have
\begin{eqnarray}
&&y'_{r+1}(t)=(\beta-\alpha)+\sum^{M-1}_{m=0}c_{1m}(p_{1,i}(t)-p_{2,i}(1)),\\
&&\label{P2_55}y_{r+1}(t)=(1-t)\alpha + t\beta +\sum_{i=0}^{2M}a_{i}(p_{2,i}(t)-tp_{2,i}(1)).
\end{eqnarray}
\textbf{Case (iii)}: In equation \eqref{P2_1c} we have $y'(0)=\alpha, ay(1)+by'(1)=\beta$. We finally have
\begin{eqnarray}
&&y'_{r+1}(t)=\alpha+\sum_{i=0}^{2M}a_{i}p_{1,i}(t),\\
&&\label{P2_57} y_{r+1}(t)=\frac{\beta}{a}+\left(t-1-\frac{b}{a}\right)\alpha+\sum_{i=0}^{2M}a_{i}\left(p_{2,i}(t)-p_{2,i}(1)-\frac{b}{a}p_{1,i}(1)\right).
\end{eqnarray}
\subsection{Haar Wavelet Newton Approach(HWNA)}\label{P2_subsec4d}
Here we use same procedure as that of HeWNA.
\subsubsection{Treatment of the Boundary Value Problem}\label{P2_subsubsec4d}
Expressions for different boundary conditions in HWNA method are given below.\\
\textbf{Case (i)}: In equation \eqref{P2_1a} we have 	
$y'(0)=\alpha,y(1)=\beta$. Following same procedure, final expression will take the form
\begin{eqnarray}
&&y'(t)=\alpha+\sum_{i=0}^{2M}a_{i}p_{1,i}(t),\\
&&\label{P2_58}y(t)=(t-1)\alpha + \beta +\sum^{2M}_{m=0}a_{i}(p_{2,i}(t)-p_{2,i}(1)).
\end{eqnarray}
\noindent\textbf{Case (ii)}: In equation \eqref{P2_1b} we have 	
$y(0)=\alpha,y(1)=\beta$. So by linearization we have $y(0)=\alpha,~y(1)=\beta$. Final expression is of the form
\begin{eqnarray}
&&y'(t)=(\beta-\alpha)+\sum^{M-1}_{m=0}c_{1m}(p_{1,i}(t)-p_{2,i}(1)),\\
&&\label{P2_59}
y(t)=(1-t)\alpha + t\beta +\sum^{2M}_{m=0}c_{1m}(p_{2,i}(t)-p_{2,i}(1)).
\end{eqnarray}
\textbf{Case (iii)}: In equation \eqref{P2_1c} we have 	
$y'(0)=\alpha,~ ay(1)+by'(1)=\beta$, so we have the following expression:
\begin{eqnarray}
&&y'(t)=\alpha+\sum_{i=0}^{2M}a_{i}p_{1,i}(t),\\
&&\label{P2_60} y(t)=\frac{\beta}{a}+\left(t-1-\frac{b}{a}\right)\beta+\sum^{2M}_{m=0}c_{1m}\left(p_{2,i}(t)-p_{2,i}(1)-\frac{b}{a}p_{1,i}(1)\right).
\end{eqnarray}
\section{Convergence}\label{P2_sec5}
Let us consider $2^{\mathrm{nd}}$ order ordinary differential equation in general form
\begin{equation*}
G(t,u,u',u'')=0.
\end{equation*}
We consider the HeWNA method. Let
\begin{equation}\label{P2_con1}
f(t)=u''(t)= \sum^{\infty}_{n=1}\sum^{\infty}_{m=0}c_{nm}\psi_{nm}(t).
\end{equation}
Integrating above equation two times we have
\begin{equation}\label{P2_con2}
u(t)= \sum^{\infty}_{n=1}\sum^{\infty}_{m=0}c_{nm}J^2\psi_{nm}(t)+B_{T}(t),
\end{equation}
where $B_{T}(t)$ stands for boundary term.
\begin{theorem}
Let us assume that, $f(t)=\frac{d^2u}{dt^2}\in L^2(\mathbb{R})$ is a continuous function defined on [0,1]. Let us consider $f(t)$ is bounded, i.e.,
\begin{equation}\label{P2_con3}
\forall  t \in [0,1] \quad \quad \exists \quad \eta :\left|\frac{d^2u}{dt^2}\right| \leq \eta.
\end{equation}
Then method based on Hermite Wavelet Newton Approach (HeWNA) converges.
\end{theorem}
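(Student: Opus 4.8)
The plan is to exploit that, by Mallat's theorem (Theorem~\ref{Mallat}), the Hermite wavelet family $\{\psi_{nm}\}$ is an orthonormal basis of $L^2(\mathbb{R})$, so that the expansion \eqref{P2_con1} of $f=u''$ converges in the $L^2$ norm; the boundedness hypothesis \eqref{P2_con3} is then used to make the coefficient estimates and the truncation error explicit. The target is to show that the finite reconstruction of $u''$, and hence of $u$ after double integration \eqref{P2_con2}, approaches the exact quantity as the resolution $k$ and the polynomial order $M$ grow.

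First I would write the wavelet coefficients through orthonormality as $c_{nm}=\int_0^1 f(t)\psi_{nm}(t)\,dt$, substitute the explicit form \eqref{P2_Def1}, and change variables $v=2^{k}t-\hat n$ to reduce each coefficient to an integral of $f$ against $H_m$ over a fixed reference interval. Applying $|f|\le\eta$ from \eqref{P2_con3} then yields an explicit bound of the shape $|c_{nm}|\le \eta\,2^{-k/2}(n!\,2^{n}\sqrt{\pi})^{-1/2}\int_{-1}^{1}|H_m(v)|\,dv$, which exhibits decay in the translation/resolution index and isolates the dependence on the Hermite order through $\int_{-1}^{1}|H_m|$.

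Next I would assemble these into an estimate for the squared truncation error. Since $|f|\le\eta$ on $[0,1]$ already gives $\|f\|_{L^2[0,1]}\le\eta<\infty$, Parseval's identity for the orthonormal basis forces $\sum_{n,m}|c_{nm}|^2=\|f\|_{L^2}^2\le\eta^2$, so the tail $\sum_{n>2^{k-1}}\sum_{m\ge M}|c_{nm}|^2$ is the remainder of a convergent series and tends to $0$ as $k,M\to\infty$. This is precisely $\|u''-u''_{k,M}\|_{L^2}^2\to 0$, where $u''_{k,M}$ denotes the finite reconstruction \eqref{P2_3}.

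Finally I would transfer this to the solution itself. Because $u$ is recovered from $u''$ by the twice-iterated integral operator $J^2$ plus the fixed boundary term $B_T(t)$ of \eqref{P2_con2}, and $J^2$ is a bounded operator on $L^2[0,1]$, the error in $u$ (and in $u'$ through $J$) is controlled by $\|u''-u''_{k,M}\|_{L^2}$ and therefore also vanishes; the Newton--Raphson step only selects the coefficients realizing this reconstruction and does not affect the limiting argument. The step I expect to be the main obstacle is the direct coefficient bound, namely controlling $\int_{-1}^{1}|H_m|$ against the normalizing factor so that square-summability is transparent, since the Hermite polynomials grow with their order. This difficulty is bypassed by invoking Parseval, which guarantees square-summability of $\{c_{nm}\}$ directly from $f\in L^2$ without requiring a sharp growth estimate for the Hermite polynomials.
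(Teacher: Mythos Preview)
Your route is genuinely different from the paper's, and it rests on an application of Parseval that is not justified for these wavelets. The functions $\psi_{nm}$ in \eqref{P2_Def1} carry the normalization $\frac{1}{\sqrt{n!\,2^{n}\sqrt{\pi}}}$ inherited from the weighted inner product $\int_{\mathbb{R}}H_mH_{m'}e^{-t^{2}}\,dt$; after the substitution $y=2^{k}t-\hat n$ the unweighted pairing becomes $\langle\psi_{nm},\psi_{nm'}\rangle_{L^{2}[0,1]}=\frac{1}{n!\,2^{n}\sqrt{\pi}}\int_{-1}^{1}H_{m}(y)H_{m'}(y)\,dy$, and the Hermite polynomials are \emph{not} orthogonal with respect to Lebesgue measure on $[-1,1]$. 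Hence $\{\psi_{nm}\}$ is not an orthonormal system in $L^{2}[0,1]$, Mallat's theorem does not apply in the form you invoke, and the identity $\sum_{n,m}|c_{nm}|^{2}=\|f\|_{L^{2}}^{2}$ is unavailable. Your final step (boundedness of $J^{2}$ on $L^{2}[0,1]$ to pass from $u''$ to $u$) is fine, but the tail estimate it is meant to transfer has not been established.

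The paper avoids Parseval altogether and works directly with the error in $u$. It writes $\|E_{k,M}\|_{2}^{2}\le\sum_{n,m}\sum_{s,r}\int_{0}^{1}|c_{nm}|\,|c_{sr}|\,|J^{2}\psi_{nm}|\,|J^{2}\psi_{sr}|\,dt$ and bounds each factor separately. Using the same change of variable together with the identity $H_{m}=H'_{m+1}/\bigl(2(m+1)\bigr)$, it obtains the pointwise estimates
\[
|J^{2}\psi_{nm}(t)|\le \frac{2^{-k/2}\,h}{(m+1)\sqrt{n!\,2^{n}\sqrt{\pi}}},\qquad
|c_{nm}|\le \frac{2^{-k/2}\,\eta\,h}{(m+1)\sqrt{n!\,2^{n}\sqrt{\pi}}},
\]
with $h=\int_{-1}^{1}|H'_{m+1}(y)|\,dy$. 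Substituting back factorizes the quadruple sum into products of the convergent series $\sum_{n}\frac{1}{n!\,2^{n}\sqrt{\pi}}$ and $\sum_{m}\frac{1}{(m+1)^{2}}$, whence $\|E_{k,M}\|_{2}\to 0$. The extra factor $\frac{1}{m+1}$ extracted from the derivative relation is precisely the device the paper uses to handle the growth of $\int_{-1}^{1}|H_{m}|$ that you flagged as the main obstacle; your Parseval shortcut would have sidestepped it, but only under an orthonormality hypothesis that does not hold here.
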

\begin{proof}
In \eqref{P2_con2} by truncating expansion we have,
\begin{eqnarray}\label{P2_con4}
u^{k,M}(t)= \sum^{2^k-1}_{n=1}\sum^{M-1}_{m=0}c_{nm}J^2\psi_{nm}(t)+B_{T}(t)
\end{eqnarray}
So error $E_{k,M}$  can be expressed as
\begin{eqnarray}\label{P2_con5}
\|E_{k,M}\|_{2}=\|u(t)-u^{k,M}(t)\|_{2}=\left\|\sum^{\infty}_{n=2^k}\sum^{\infty}_{m=M}c_{nm}J^2\psi_{nm}(t)\right\|_{2}.
\end{eqnarray}
Expanding $L^{2}$ norm, we have
\begin{eqnarray}\label{P2_con6}
&&\|E_{k,M}\|_2^2=\int_{0}^{1}\left(\sum^{\infty}_{n=2^k}\sum^{\infty}_{m=M}c_{nm}J^2\psi_{nm}(t)\right)^2dt,\\
\label{P2_con8}
&&\|E_{k,M}\|_2^2=\sum^{\infty}_{n=2^k}\sum^{\infty}_{m=M}\sum^{\infty}_{s=2^k}\sum^{\infty}_{r=M}\int_{0}^{1}c_{nm}c_{sr} J^2\psi_{nm}(t)J^2\psi_{sr}(t)dt,\\
&&\label{P2_con9}
\|E_{k,M}\|_2^2\leq \sum^{\infty}_{n=2^k}\sum^{\infty}_{m=M}\sum^{\infty}_{s=2^k}\sum^{\infty}_{r=M}\int_{0}^{1}|c_{nm}|~|c_{sr}|~ |J^2\psi_{nm}(t)|~|J^2\psi_{sr}(t)|dt.
\end{eqnarray}
Now
\begin{eqnarray*}\label{P2_con7}
&&|J^{2}\psi_{nm}(t)|\leq \int_{0}^{t}\int_{0}^{t}|\psi_{nm}(t)|dtdt,\\
\label{P2_con10}
&&\leq \int_{0}^{t}\int_{0}^{1}|\psi_{nm}(t)|dtdt,
\end{eqnarray*}
since $t  \in [0,1]$. Now by \eqref{P2_Def1}, we have
\begin{eqnarray*}\label{P2_con11}
|J^{2}\psi_{nm}(t)|\leq  2^{k/2}\frac{1}{\sqrt{n!2^{n}\sqrt{\pi}}}\int_{0}^{t}\int_{\frac{\hat{n}-1}{2^k}}^{\frac{\hat{n}+1}{2^k}}|H_{m}(2^{k}t-\hat{n})|dtdt.
\end{eqnarray*}
By changing variable $2^kt-\hat{n}=y$, we get
\begin{eqnarray*}
|J^{2}\psi_{nm}(t)|&&\leq 2^{-k/2}\frac{1}{\sqrt{n!2^{n}\sqrt{\pi}}}\int_{0}^{t}\int_{-1}^{1}|H_{m}(y)|dydt,\\
&&\leq 2^{-k/2}\frac{1}{\sqrt{n!2^{n}\sqrt{\pi}}}\int_{0}^{t}\int_{-1}^{1}\left|\frac{H'_{m+1}(y)}{m+1}\right|dydt,\\
&&\leq 2^{-k/2}\frac{1}{(\sqrt{n!2^{n}\sqrt{\pi}})(m+1)}\int_{0}^{t}\int_{-1}^{1}|H'_{m+1}(y)|dydt.
\end{eqnarray*}
By putting $\int_{-1}^{1}|H'_{m+1}(y)|dy = h$, we get
\begin{eqnarray*}\label{P2_con12}
|J^{2}\psi_{nm}(t)|\leq 2^{-k/2}\frac{1}{(\sqrt{n!2^{n}\sqrt{\pi}})(m+1)}\int_{0}^{t}h dt.
\end{eqnarray*}
Hence
\begin{eqnarray}\label{P2_con13}
|J^{2}\psi_{nm}(t)|\leq 2^{-k/2}\frac{1}{(\sqrt{n!2^{n}\sqrt{\pi}})(m+1)}h,
\end{eqnarray}
since $t \in [0,1]$. Now for $|c_{nm}|$, we have
\begin{eqnarray}\label{P2_con14}
&&c_{nm}=\int_{0}^{1}f(t)\psi_{nm}(t)dt,\\
&&\label{P2_con15}
|c_{nm}| \leq \int_{0}^{1}|f(t)||\psi_{nm}(t)|dt.
\end{eqnarray}
Now using \eqref{P2_con3}, we have
\begin{eqnarray*}\label{P2_con20}
|c_{nm}| \leq \eta\int_{0}^{1}|\psi_{nm}(t)|dt.
\end{eqnarray*}
By \eqref{P2_Def1}, we have
\begin{eqnarray*}\label{P2_con16}
|c_{nm}|\leq  2^{k/2}\frac{\eta}{\sqrt{n!2^{n}\sqrt{\pi}}}\int_{\frac{\hat{n}-1}{2^k}}^{\frac{\hat{n}+1}{2^k}}|H_{m}(2^{k}t-\hat{n})|dt.
\end{eqnarray*}
Now by change of variable $2^kt-\hat{n}=y$, we get
\begin{eqnarray*}\label{P2_con17}
&&|c_{nm}|\leq  2^{-k/2}\frac{\eta}{\sqrt{n!2^{n}\sqrt{\pi}}}\int_{-1}^{1}|H_{m}y|dy,\\
&&\label{P2_con18} |c_{nm}|\leq  2^{-k/2}\frac{\eta}{\sqrt{n!2^{n}\sqrt{\pi}}}\int_{-1}^{1}\left|\frac{H'_{m+1}(y)}{m+1}\right|dy.
\end{eqnarray*}
By putting $\int_{-1}^{1}|H'_{m+1}(y)|dy = h$, we have
\begin{eqnarray}\label{P2_con19}
|c_{nm}|\leq 2^{-k/2}\frac{1}{(\sqrt{n!2^{n}\sqrt{\pi}})(m+1)}\eta h.
\end{eqnarray}
Now using equation \eqref{P2_con13} and \eqref{P2_con19} in \eqref{P2_con9}
\begin{eqnarray}\label{P2_con21}
&&\|E_{k,M}\|_2^2\leq 2^{-2k}\eta^2h^4\sum^{\infty}_{n=2^k}\sum^{\infty}_{m=M}\sum^{\infty}_{s=2^k}\sum^{\infty}_{r=M}\int_{0}^{1}\frac{1}{(\sqrt{n!2^{n}\sqrt{\pi}})^2(m+1)^2}\frac{1}{(\sqrt{s!2^{s}\sqrt{\pi}})^2(r+1)^2} dt,\\
&&\label{P2_con22}
\|E_{k,M}\|_2^2\leq 2^{-2k}\eta^2h^4\sum^{\infty}_{n=2^k}\frac{1}{n!2^{n}\sqrt{\pi}}\sum^{\infty}_{s=2^k}\frac{1}{s!2^{s}\sqrt{\pi}} \sum^{\infty}_{m=M}\frac{1}{(m+1)^2}\sum^{\infty}_{r=M}\frac{1}{(r+1)^2}.
\end{eqnarray}
Here all four series converges and
$\|E_{k,M}\| \longrightarrow 0$ as $k,M \rightarrow \infty$.
\end{proof}
\begin{remark}
The above theorem can easily be extended for the method HeWQA.
\end{remark}
\begin{theorem}
Let us assume that $f(t)=\frac{d^u}{dt^2}\in L^2(\mathbb{R})$ is a continuous function on $[0,1]$ and its first derivative is bounded for all $t\in [0,1]$ there exists $\eta$ such that $\left|\frac{df}{dt}\right|\leq \eta$, then the method based on HWQA and HWNA converges.
\end{theorem}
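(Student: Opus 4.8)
The plan is to mirror the structure of the preceding convergence proof for HeWNA, replacing the Hermite basis $\{\psi_{nm}\}$ by the Haar basis $\{h_i\}$ and the iterated Hermite integrals $J^2\psi_{nm}$ by the Haar integrals $p_{2,i}$ from \eqref{P2_11}--\eqref{P2_12}. Writing the truncated approximation as $u^{M}(t)=\sum_{i=0}^{2M}a_i\,p_{2,i}(t)+B_T(t)$, the error is the tail $\|E_M\|_2=\big\|\sum_{i>2M}a_i\,p_{2,i}(t)\big\|_2$. Expanding the $L^2$ norm and passing to absolute values inside the integral, exactly as in \eqref{P2_con6}--\eqref{P2_con9}, reduces the whole argument to two estimates: a pointwise bound on $|p_{2,i}(t)|$ and a bound on the coefficients $|a_i|$.

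The pointwise bound on the double integral is routine. Since $|h_i|\le 1$ and $h_i$ is supported on an interval of width $\eta_3(i)-\eta_1(i)=2^{-j}$, two integrations give $|p_{2,i}(t)|\le C\,2^{-2j}$ for a universal constant $C$, the decay being governed by the resolution level $j$. The decisive step is instead the coefficient estimate. By the orthogonality relation \eqref{P2_10} the coefficient attached to $h_i$ is $a_i=2^{j}\int_0^1 f(t)\,h_i(t)\,dt$, and because $h_i=\chi_{[\eta_1,\eta_2)}-\chi_{[\eta_2,\eta_3)}$ splits into two equal halves of width $\mu\Delta x$, the constant part of $f$ integrates to zero against $h_i$. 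Hence $a_i$ cannot be controlled by $f$ alone; instead I would apply the mean value theorem for integrals on each half, writing $\int_0^1 f\,h_i=\mu\Delta x\,(f(\xi_1)-f(\xi_2))$ with $\xi_1\in(\eta_1,\eta_2)$ and $\xi_2\in(\eta_2,\eta_3)$, and then invoke the hypothesis $|df/dt|\le\eta$ together with $|\xi_1-\xi_2|\le 2\mu\Delta x=2^{-j}$ to obtain $|a_i|\le \eta\,2^{-j-1}$. This is precisely why the theorem hypothesizes a bound on the first derivative rather than on $f$, and it is the one place where the Haar argument genuinely departs from the Hermite one.

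Combining the two estimates as in \eqref{P2_con21}--\eqref{P2_con22}, each product $|a_i|\,|p_{2,i}(t)|$ carries the factor $2^{-j}\cdot 2^{-2j}=2^{-3j}$; summing over the $2^{j}$ wavelets present at level $j$ leaves a geometric series in $2^{-2j}$, which converges, so that $\|E_M\|_2\to 0$ as $M=2^J\to\infty$. For HWQA the identical computation applies to the linearized iterates $y_{r+1}$, since quasilinearization modifies only the right-hand side of the differential equation and leaves the wavelet expansion of $y''_{r+1}$ in \eqref{P2_53e} untouched; hence both HWQA and HWNA converge. I expect the only real subtlety to be the coefficient bound, namely exploiting the cancellation in $\int_0^1 f\,h_i$ correctly and tracking the resolution-dependent powers of $2^{-j}$ so that the resulting series is summable.
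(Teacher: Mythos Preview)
Your proposal is correct and follows exactly the approach the paper intends: its entire proof of this theorem is the single sentence ``The proof is similar to the previous theorem,'' and your sketch carries out precisely that adaptation of the Hermite argument to the Haar basis. In fact you supply the one substantive detail the paper suppresses, namely that the cancellation in $\int_0^1 f\,h_i\,dt$ forces the mean value theorem and hence the hypothesis on $\left|\dfrac{df}{dt}\right|$ rather than on $|f|$ itself.
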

\begin{proof}
The proof is similar to the previous theorem.
\end{proof}
\section{Numerical Illustrations}\label{P2_sec6} In this section we apply HeWQA, HeWNA, HWQA and HWNA on proposed model which occurs in exothermic reaction \eqref{P2_CHEM5}. We also solve four other examples from real life and compare solutions with among these four methods and with exact solutions whenever available.

To examine the accuracy of methods we define maximum absolute error $L_\infty$ as
\begin{equation}
L_\infty=\max_{x \in [0,1]}|y(t)-y_w(t)|
\end{equation}
here $y(t)$ is exact solution and $y_w(t)$ is wavelet solution.
and the $L_2$-norm error as
\begin{equation}
L_2=\left(\sum_{j=o}^{2M}|y(x_j)-y_w(x_j)|^2\right)^{1/2}
\end{equation}
here $y(x_j)$ is exact solution and $y_w(x_j)$ is wavelet solution at the point $x_j$.

\subsection{Example 1 (Exothermic Reaction with Modified Arrehenius Law)}\label{P2_sec6a}
Consider the non-linear SBVP \eqref{P2_CHEM5} with given boundary condition
\begin{eqnarray}\label{P2_61}
Ly+B\exp\left(\frac{-A}{(c^n+y^{n})^{1/n}}\right)=0,\quad y'(0)=0,\quad y(1)=0,
\end{eqnarray}
we take some particular cases when $A=1,B=1,c=1$.

Comparison Graphs taking initial vector [$0,0,\hdots,0$] and $J=1$, $J=2$ with $n=1,~k_g=1$; $n=1,~k_g=2$; $n=2,~k_g=1$; $n=2,~k_g=2$;$n=3,~k_g=1$ and $n=3,~k_g=2$ are plotted in Figure \ref{exfig1}, Figure \ref{exfig2}, Figure \ref{exfig3}, Figure \ref{exfig4}, Figure \ref{exfig5} and Figure \ref{exfig6} respectively. Tables for solution is tabulated in table \ref{extab1}, \ref{extab2}, table \ref{extab3}, table \ref{extab4}, table \ref{extab5} and table \ref{extab6} respectively.
\begin{table}[H]
\caption{\small{ Comparison of HWQA, HeWNA, HWQA, HeWQA method solution for example \ref{P2_sec6a} with $n=1,~k_g=1$ taking $J=2$ :}}\label{extab1}								 
\begin{center}
\resizebox{10cm}{2cm}{\begin{tabular}{|c | l| l| l| l|}
\hline Grid Points	&	HWNA \cite{akvdt2018}	&	HeWNA	&	HWQA \cite{akvdt2018}	&	HeWQA\\
\hline
   0	&	0.098471606	&	0.098471868	&	0.099733232	&	0.098721649	\\
  1/16	&	0.098078784	&	0.09807895	&	0.099334324	&	0.098328004	\\
  3/16	&	0.094937743	&	0.094937908	&	0.096144758	&	0.095180981	\\
  5/16	&	0.08866797	&	0.08866813	&	0.089779277	&	0.088898418	\\
  7/16	&	0.079294153	&	0.079294305	&	0.080265493	&	0.079503596	\\
  9/16	&	0.066853514	&	0.066853646	&	0.067645598	&	0.06703223	\\
 11/16	&	0.051396022	&	0.051396117	&	0.051977236	&	0.051533416	\\
 13/16	&	0.032984684	&	0.032984721	&	0.033334595	&	0.033070901	\\
 15/16	&	0.011695904	&	0.011695851	&	0.011809628	&	0.01172465	
\\\hline
\end{tabular}}											
\end{center}											
\end{table}	

\begin{figure}[H]
\begin{center}
\includegraphics[scale=0.30]{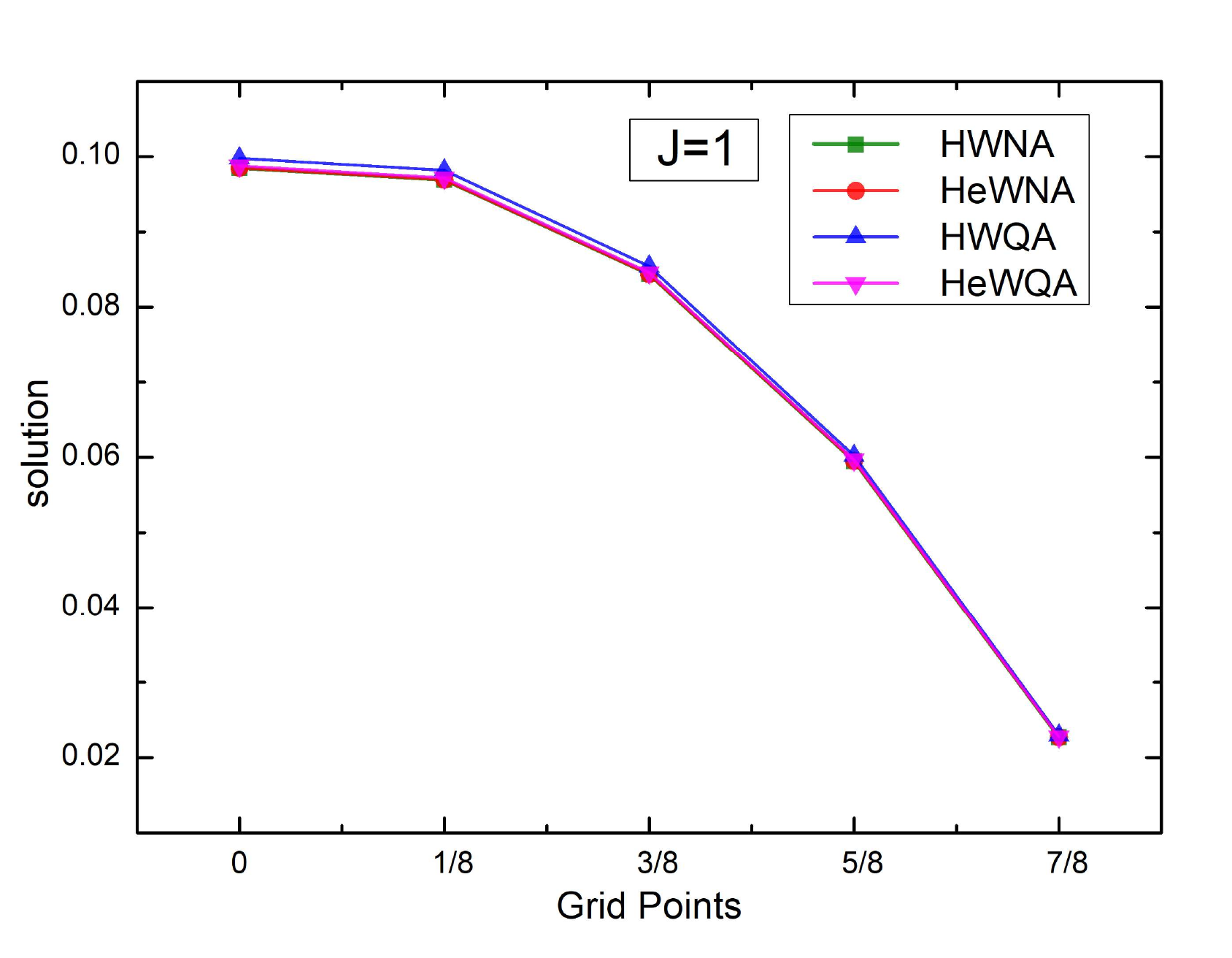}\includegraphics[scale=0.30]{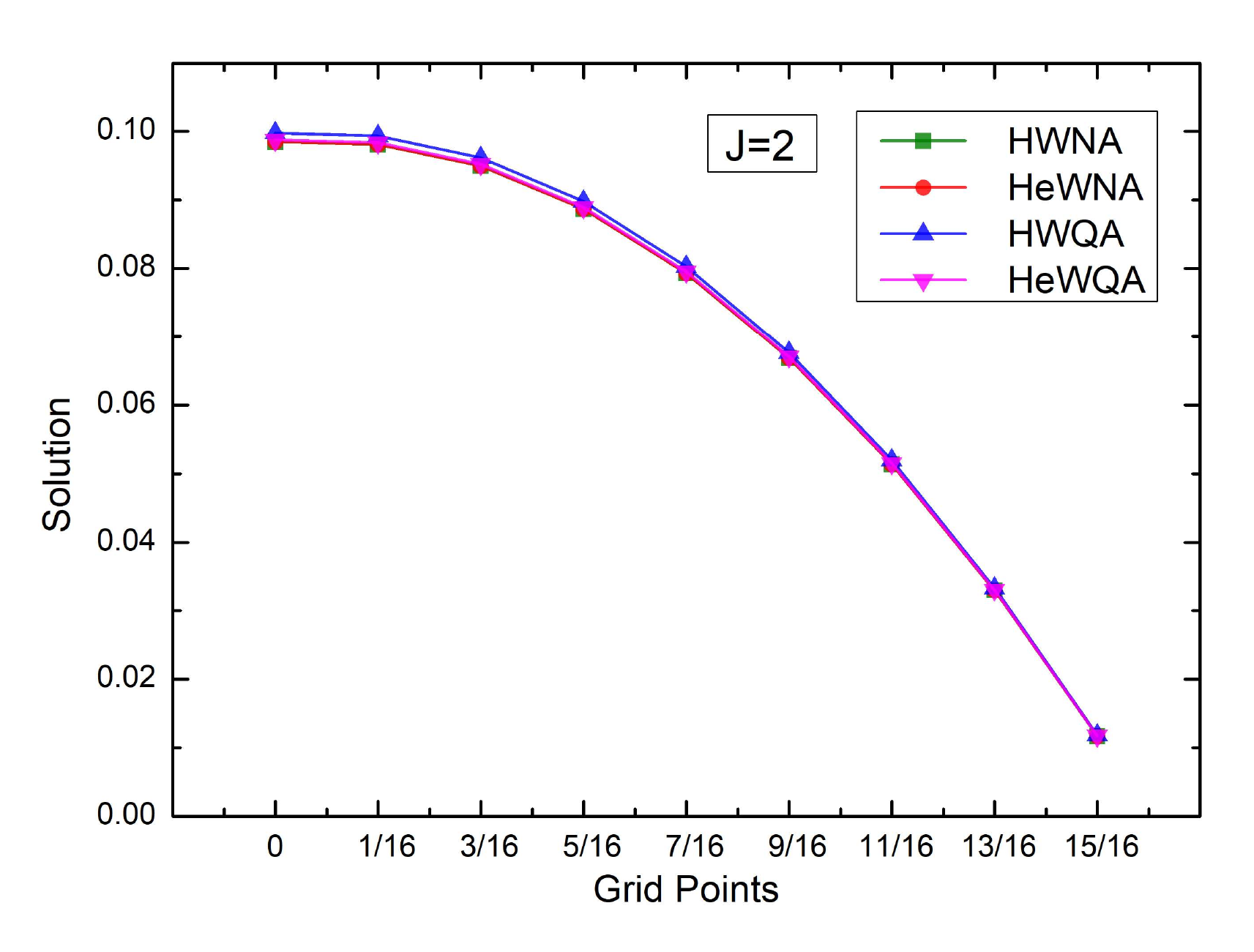}
\end{center}
\caption{Comparison plots of solution methods for $J=1,2$ for example \ref{P2_sec6a} with $n=1,~k_g=1$:}\label{exfig1}
\end{figure}
		
\begin{table}[H]
\caption{\small{ Comparison of HWQA, HeWNA, HWQA, HeWQA method solution for example \ref{P2_sec6a} with $n=1,~k_g=2$ taking $J=2$ :}}\label{extab2}														 \centering											
\begin{center}	
\resizebox{10cm}{2cm}{\begin{tabular}{|c | l| l| l| l|}
\hline Grid Points	&	   HWNA \cite{akvdt2018}	&	HeWNA	&	  HWQA \cite{akvdt2018}	&	HeWQA	\\
\hline0	&	0.063984889	&	0.063981865	&	0.064360996	&	0.064044224	\\
  1/16	&	0.063730596	&	0.063727533	&	0.064104699	&	0.063789741	\\
  3/16	&	0.061697078	&	0.061694111	&	0.062055233	&	0.06175505	\\
  5/16	&	0.057636632	&	0.057633858	&	0.057963614	&	0.057691954	\\
  7/16	&	0.05156246	&	0.051559975	&	0.051844545	&	0.051613138	\\
  9/16	&	0.043494423	&	0.04349232	&	0.043720272	&	0.043537903	\\
 11/16	&	0.0334591	&	0.033457472	&	0.033620792	&	0.033492504	\\
 13/16	&	0.02148988	&	0.021488815	&	0.021584087	&	0.021510584	\\
 15/16	&	0.007627057	&	0.007626644	&	0.007656348	&	0.007633719	
\\\hline
\end{tabular}}											
\end{center}											
\end{table}

\begin{figure}[H]
\begin{center}
\includegraphics[scale=0.30]{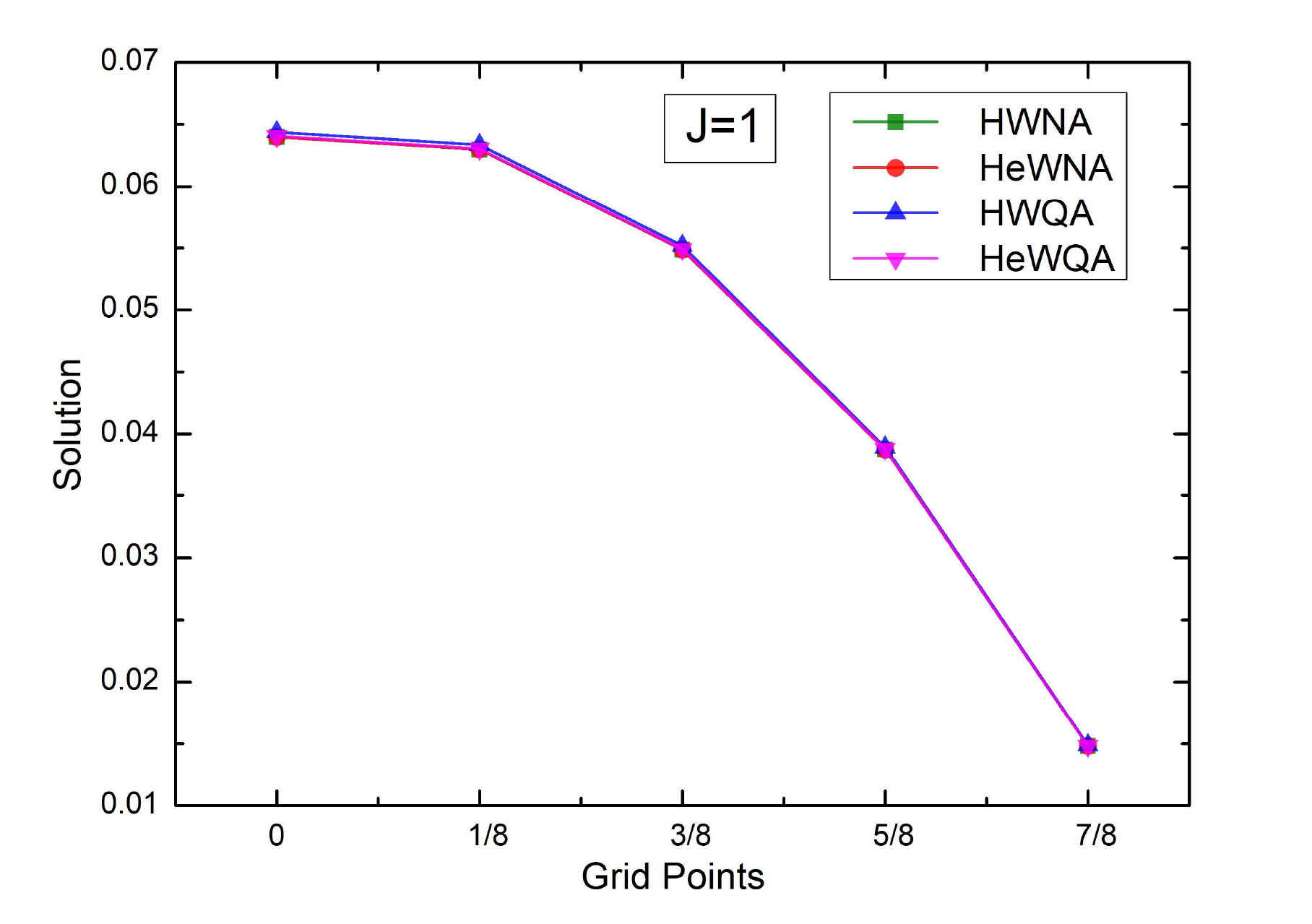}\includegraphics[scale=0.30]{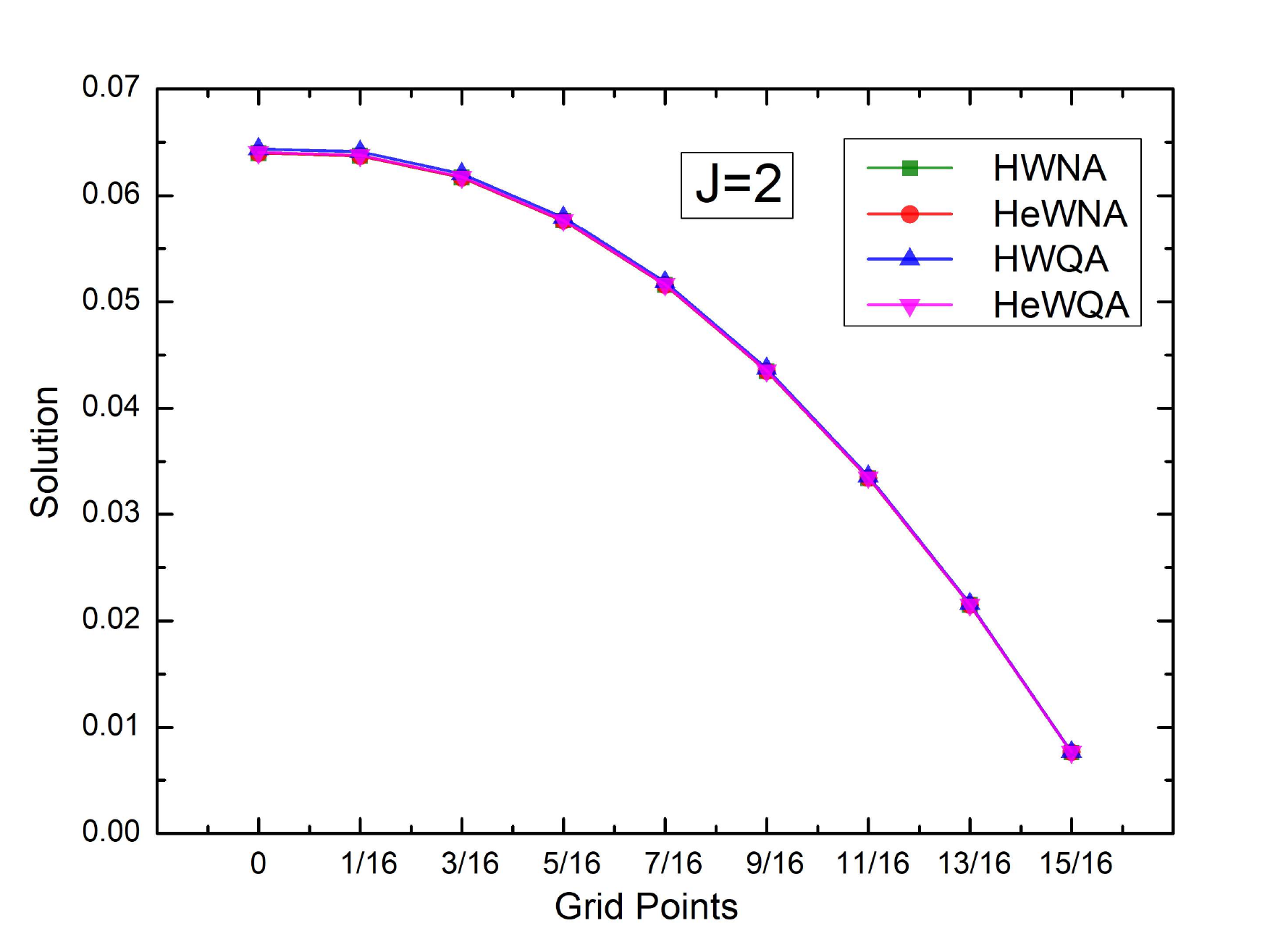}
\end{center}
\caption{Comparison plots of solution methods for $J=1,2$ for example \ref{P2_sec6a} with $n=1,~k_g=2$:}\label{exfig2}
\end{figure}

\begin{table}[H]
\caption{\small{Comparison of HWQA, HeWNA, HWQA, HeWQA method solution for example \ref{P2_sec6a} with $n=2,~k_g=1$ taking $J=2$ :}}\label{extab3} 													 
\begin{center}	
\resizebox{10cm}{2cm}{\begin{tabular}{|c | l| l| l| l|}
\hline		
Grid Points	&	  HWNA \cite{akvdt2018}	&	HeWNA	&	HWQA \cite{akvdt2018}	&	HeWQA	\\
\hline
   0	&	0.092208342	&	0.092207929	&	0.092312756	&	0.092313207	\\
  1/16	&	0.091847576	&	0.091847154	&	0.091951987	&	0.091952438	\\
  3/16	&	0.088961585	&	0.088961165	&	0.089065836	&	0.089066284	\\
  5/16	&	0.083190705	&	0.08319029	&	0.083293585	&	0.083294024	\\
  7/16	&	0.07453702	&	0.074536618	&	0.074635444	&	0.074635864	\\
  9/16	&	0.063003362	&	0.06300299	&	0.063092011	&	0.063092397	\\
 11/16	&	0.048592958	&	0.048592641	&	0.048664618	&	0.048664943	\\
 13/16	&	0.031308953	&	0.031308728	&	0.031355798	&	0.031356022	\\
 15/16	&	0.011153812	&	0.011153735	&	0.01116988	&	0.011169952	
\\\hline
\end{tabular}}											
\end{center}											
\end{table}			

\begin{figure}[H]
\begin{center}
\includegraphics[scale=0.30]{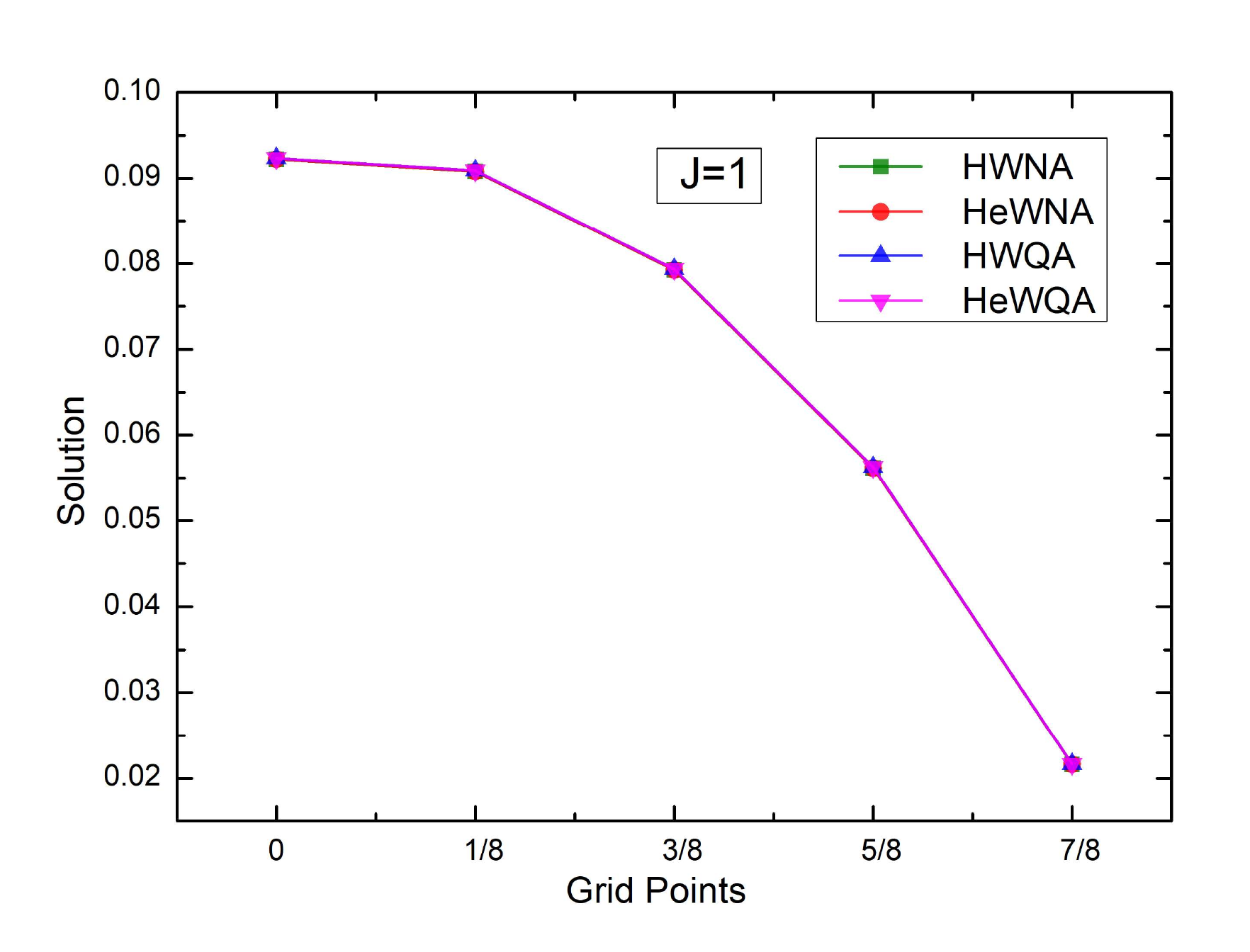}\includegraphics[scale=0.30]{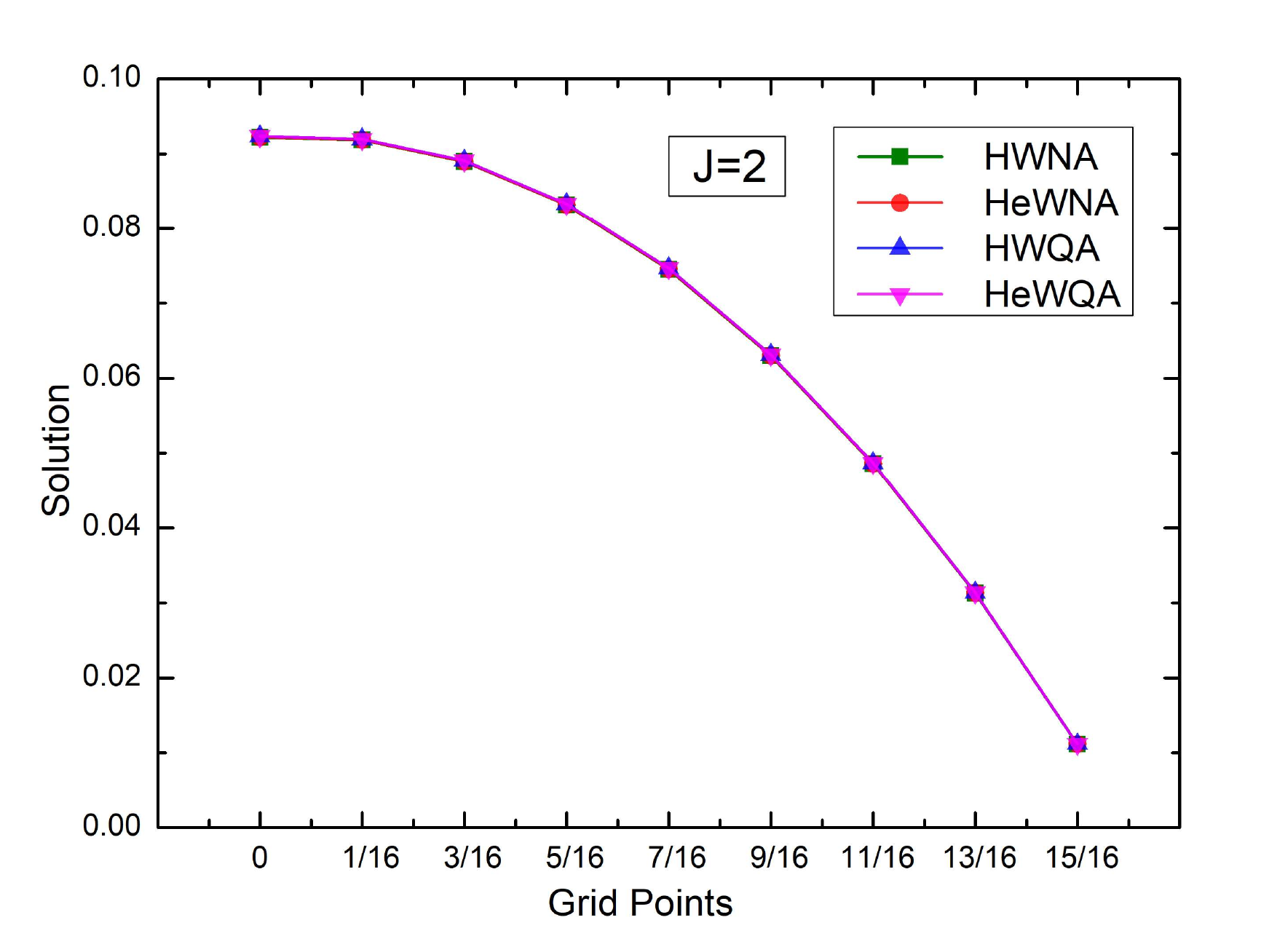}
\end{center}
\caption{Comparison plots of solution methods for example \ref{P2_sec6a} with $n=2,~k_g=1$:}\label{exfig3}
\end{figure}	

\begin{table}[H]
\caption{\small{Comparison of HWQA, HeWNA, HWQA, HeWQA method solution for example \ref{P2_sec6a} with $n=2,~k_g=2$ taking $J=2$ :}}\label{extab4}							 \begin{center}	
\resizebox{10cm}{2cm}{\begin{tabular}{|c | l| l| l| l|}
\hline
Grid Points	&	  HWNA \cite{akvdt2018}	&	HeWNA	&	HWQA \cite{akvdt2018}	&	HeWQA	\\
\hline
   0	&	0.061376073	&	0.061376325	&	0.061411277	&	0.061411345	\\
  1/16	&	0.061136122	&	0.061135877	&	0.061171325	&	0.061171393	\\
  3/16	&	0.059216558	&	0.05921632	&	0.05925171	&	0.059251777	\\
  5/16	&	0.055377823	&	0.055377598	&	0.055412494	&	0.055412559	\\
  7/16	&	0.049620654	&	0.049620448	&	0.04965375	&	0.049653812	\\
  9/16	&	0.041946044	&	0.041945866	&	0.0419757	&	0.041975756	\\
 11/16	&	0.032355109	&	0.032354967	&	0.032378845	&	0.032378891	\\
 13/16	&	0.020848901	&	0.020848809	&	0.020864147	&	0.020864177	\\
 15/16	&	0.007428191	&	0.007428161	&	0.007433258	&	0.007433263	
\\\hline
\end{tabular}}											
\end{center}											
\end{table}
	
\begin{figure}[H]
\begin{center}
\includegraphics[scale=0.30]{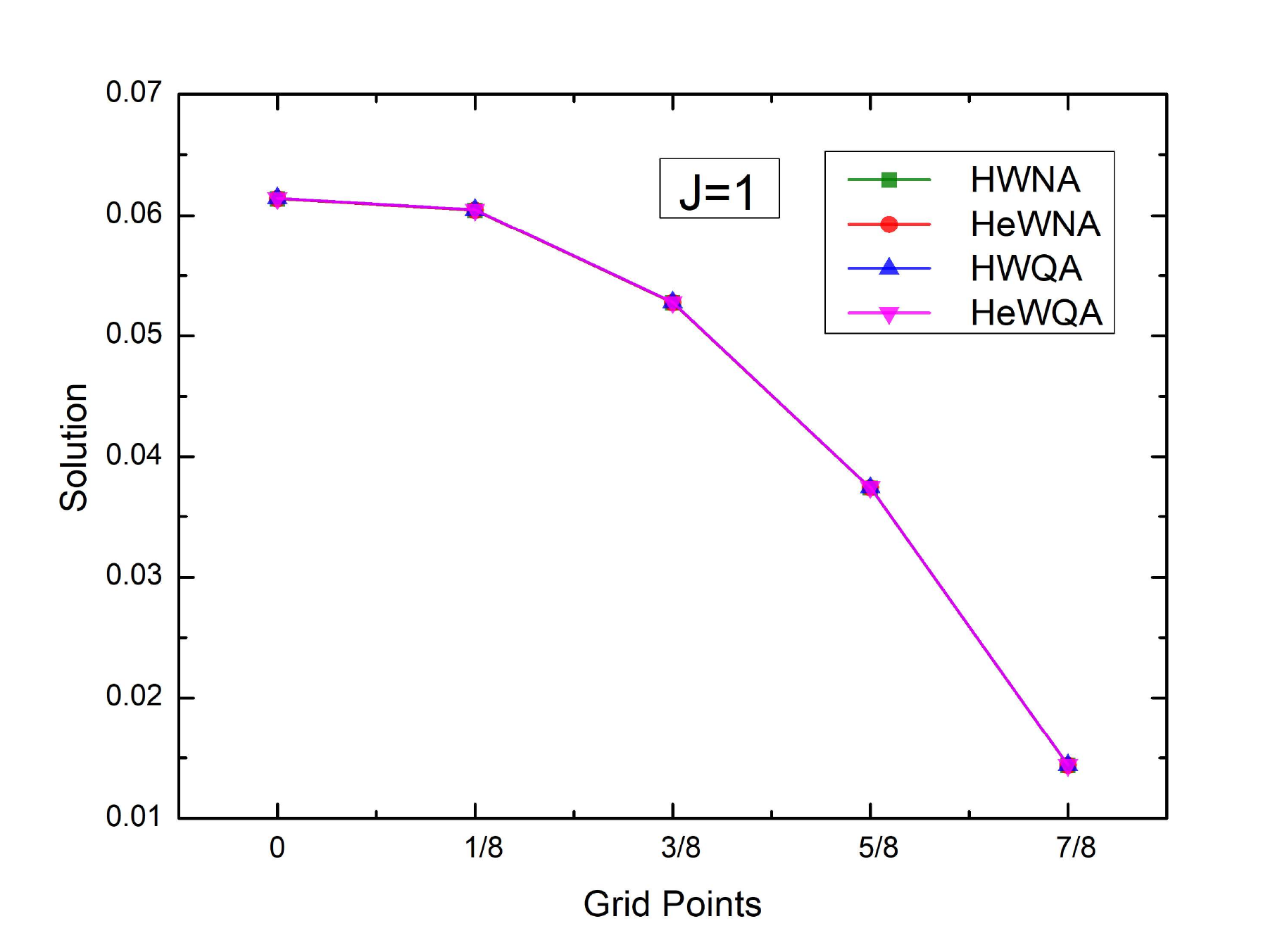}\includegraphics[scale=0.30]{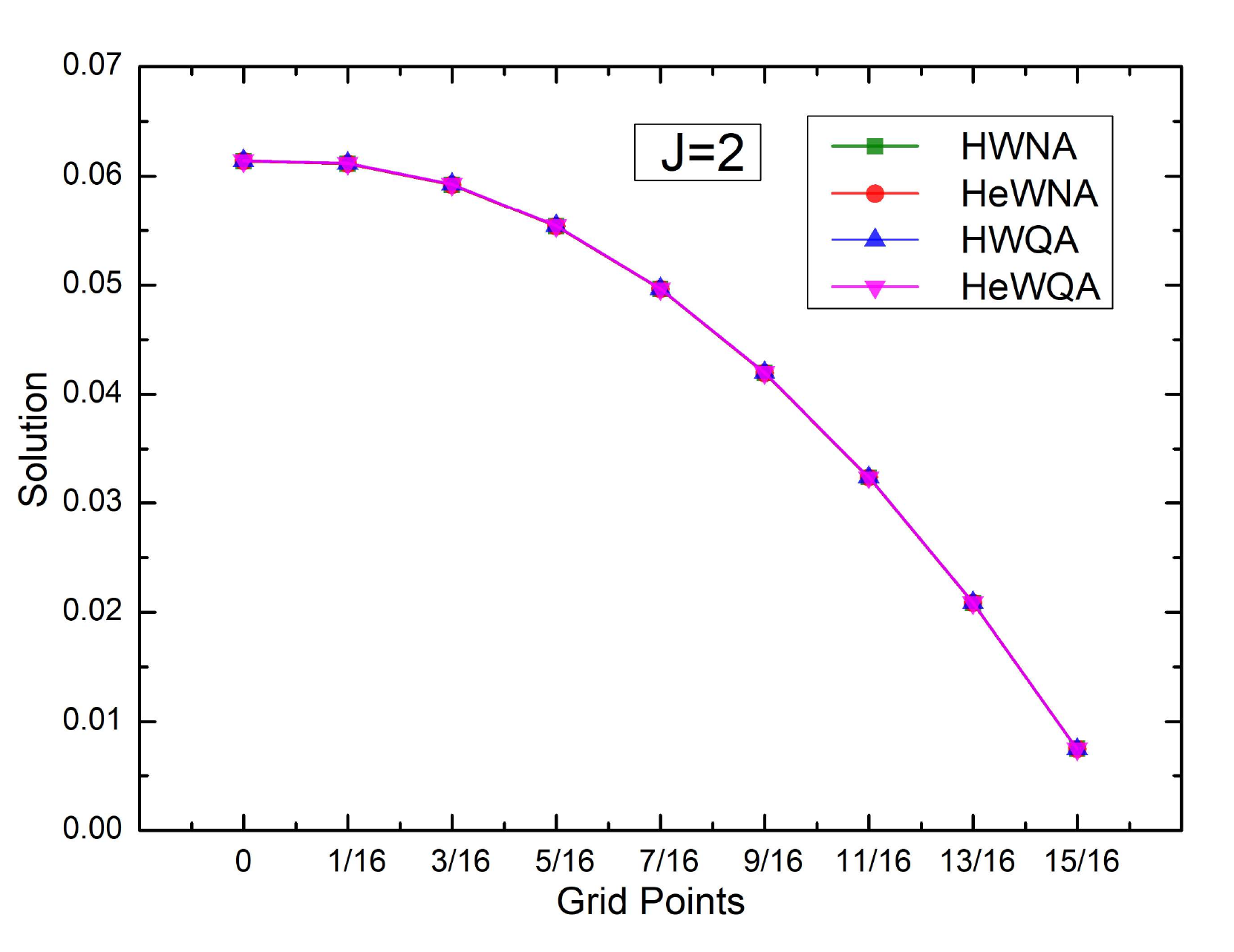}
\end{center}
\caption{Comparison plots of solution methods for $J=1,2$ for example \ref{P2_sec6a} with $n=2,~k_g=2$}\label{exfig4}
\end{figure}

\begin{table}[H]
\caption{\small{Comparison of HWQA, HeWNA, HWQA, HeWQA method solution for example \ref{P2_sec6a} with $n=3,~k_g=1$ taking $J=2$ :}}\label{extab5}				
\begin{center}	
\resizebox{10cm}{2cm}{
\begin{tabular}{|c | l| l| l| l| }
\hline		
Grid Points	&	 HWNA \cite{akvdt2018}	&	 HeWNA	&	 HWQA \cite{akvdt2018}	&	 HeWQA	\\
\hline
   0	&	0.091982324	&	0.091982282	&	0.091998157	&	0.091998241	\\
  1/16	&	0.091622975	&	0.091622932	&	0.091622975	&	0.091638892	\\
  3/16	&	0.088748193	&	0.08874815	&	0.088748193	&	0.088764086	\\
  5/16	&	0.082998726	&	0.082998685	&	0.082998726	&	0.083014381	\\
  7/16	&	0.074374752	&	0.074374713	&	0.074374752	&	0.07438963	\\
  9/16	&	0.062876486	&	0.062876451	&	0.062876486	&	0.062889712	\\
 11/16	&	0.048504139	&	0.048504111	&	0.048504139	&	0.048514619	\\
 13/16	&	0.031257869	&	0.031257851	&	0.031257869	&	0.031264554	\\
 15/16	&	0.011137751	&	0.011137745	&	0.011137751	&	0.011139994	\\\hline
\end{tabular}}											
\end{center}									
\end{table}
	
\begin{figure}[H]
\begin{center}
\includegraphics[scale=0.30]{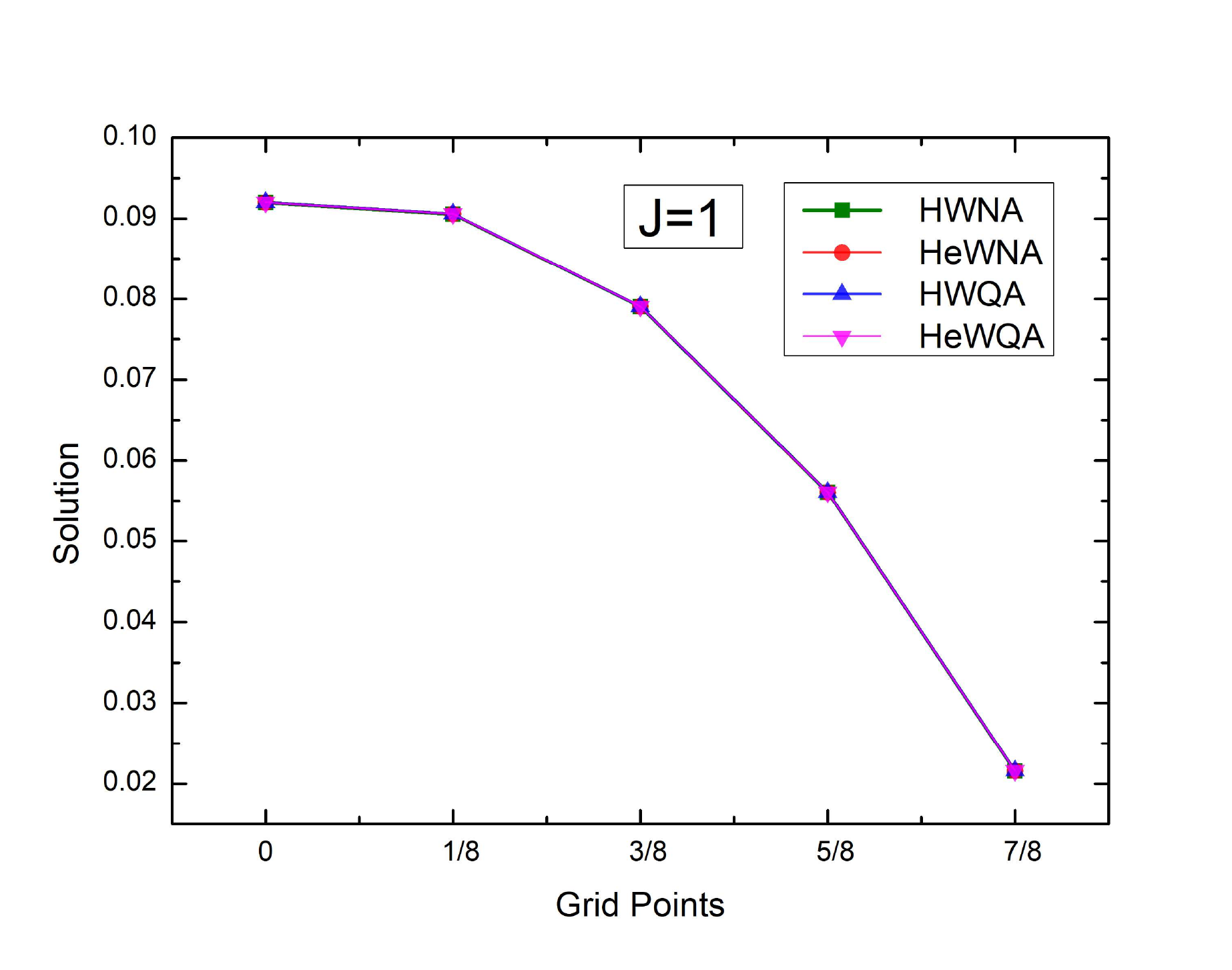}\includegraphics[scale=0.30]{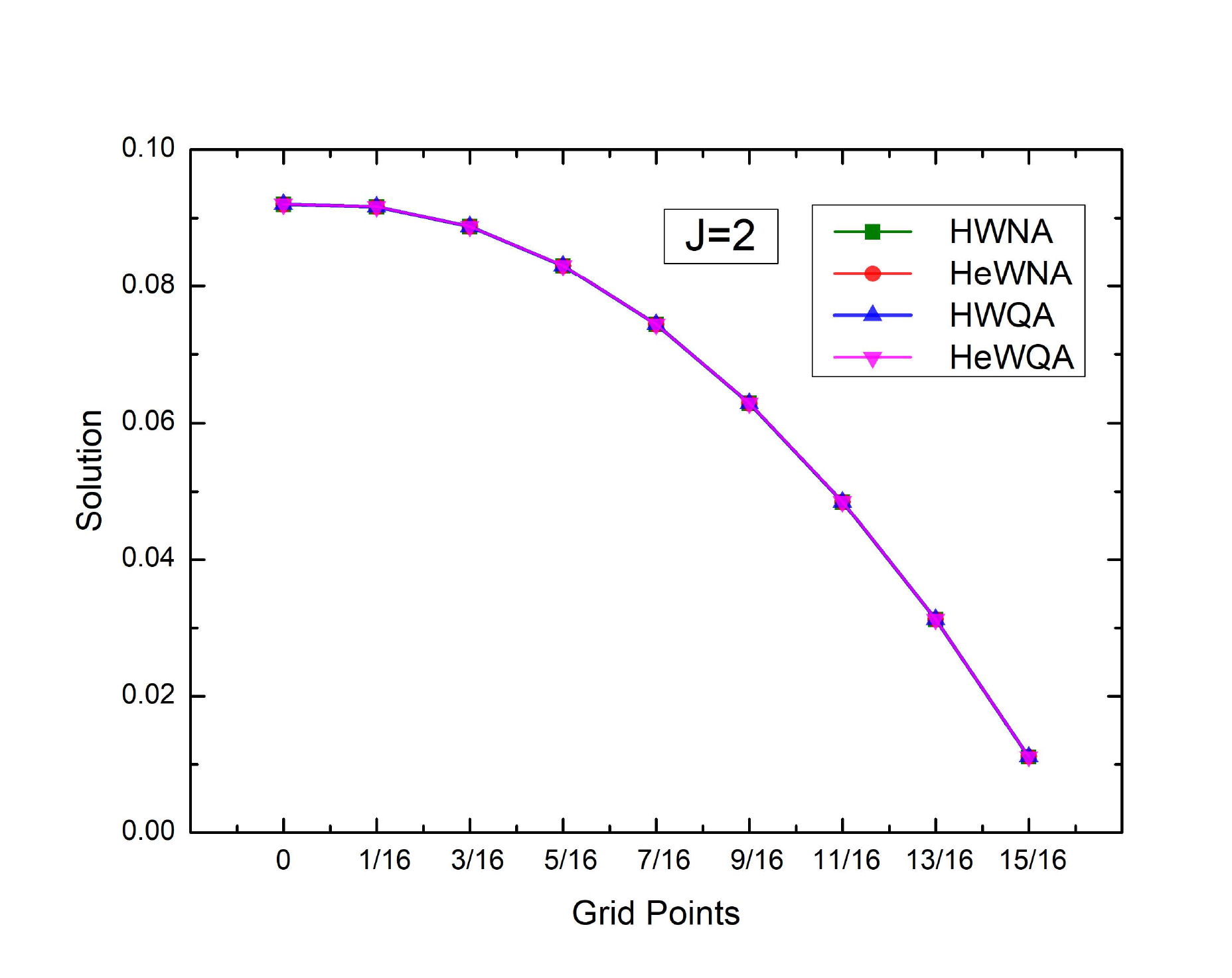}
\end{center}
\caption{Comparison plots of solution methods for $J=1,2$ for example \ref{P2_sec6a} with $n=3,~k_g=1$}\label{exfig5}
\end{figure}	

\begin{table}[H]
\caption{\small{Comparison of HWQA, HeWNA, HWQA, HeWQA method solution for example \ref{P2_sec6a} with $n=3,~k_g=2$ taking $J=2$:}}\label{extab6}					 
\begin{center}	
\resizebox{10cm}{2cm}{
\begin{tabular}{|c | l| l| l| l| }
\hline		
Grid Points	&	 HWNA \cite{akvdt2018}	&	 HeWNA	&	 HWQA \cite{akvdt2018}	&	 HeWQA	\\
\hline
   0	&	0.061315351	&	0.061315829	&	0.061318845	&	0.061321705	\\
  1/16	&	0.061075828	&	0.061075814	&	0.061079322	&	0.061083071	\\
  3/16	&	0.059159646	&	0.059159634	&	0.059163135	&	0.059166226	\\
  5/16	&	0.055327307	&	0.055327295	&	0.055330739	&	0.055333256	\\
  7/16	&	0.049578851	&	0.04957884	&	0.049582101	&	0.049584212	\\
  9/16	&	0.041914328	&	0.041914319	&	0.041917195	&	0.041918897	\\
 11/16	&	0.032333785	&	0.032333778	&	0.032336026	&	0.032337266	\\
 13/16	&	0.020837254	&	0.02083725	&	0.020838652	&	0.020839592	\\
 15/16	&	0.007424747	&	0.007424746	&	0.007425199	&	0.007424945	
\\\hline
\end{tabular}}											
\end{center}											
\end{table}	

\begin{figure}[H]
\begin{center}
\includegraphics[scale=0.30]{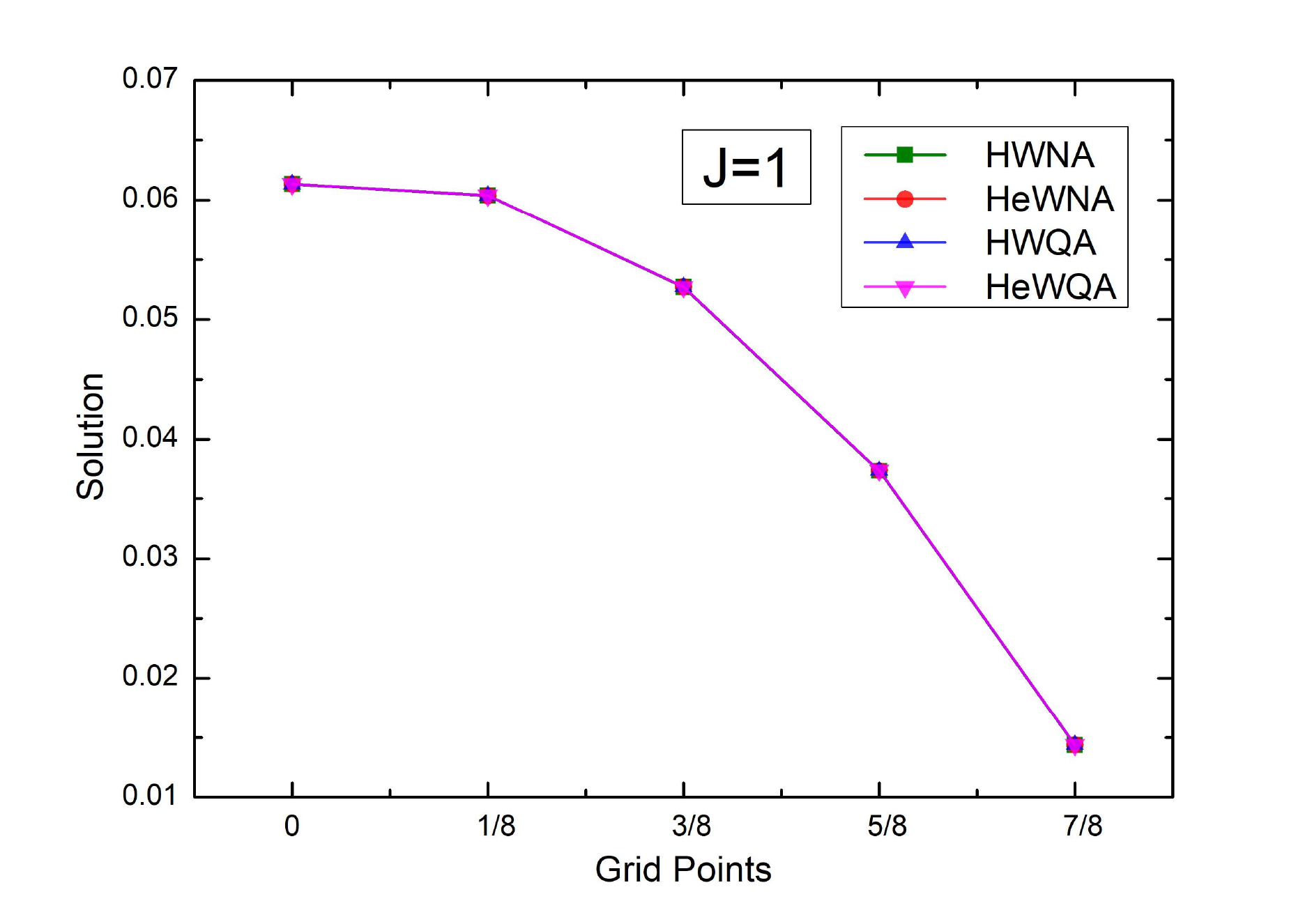}\includegraphics[scale=0.30]{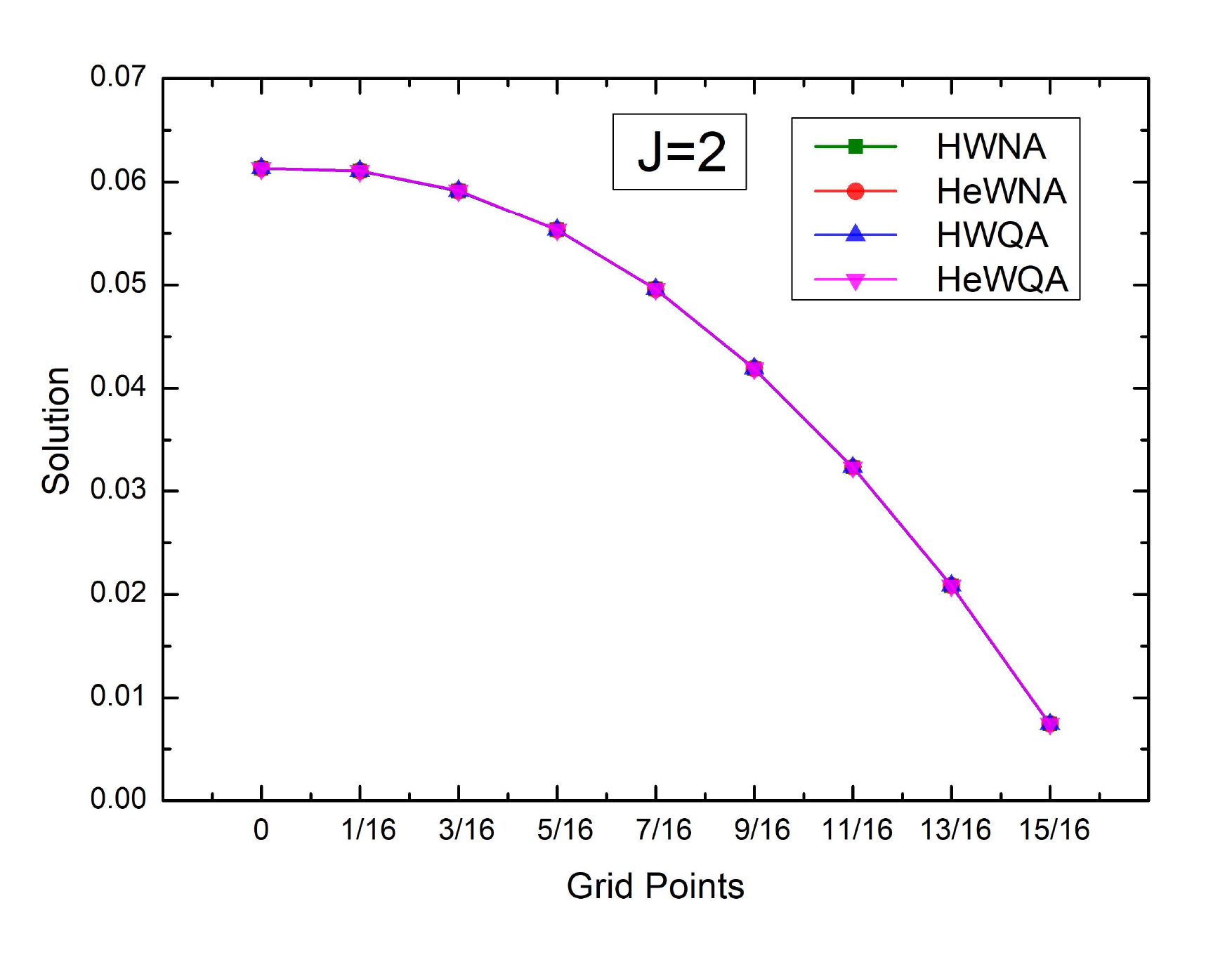}
\end{center}
\caption{Comparison plots of solution methods for $J=1,2$ for example \ref{P2_sec6a} with $n=3,~k_g=2$}\label{exfig6}
\end{figure}	
The example defined by equation \eqref{P2_61} is new and does not exist in literature. So we are not in a situation to compare the results. We have considered $n=1,2,3$ and $k_g=1,2$. Tables \ref{extab1}, \ref{extab2}, \ref{extab3}, \ref{extab4}, \ref{extab5} , \ref{extab6} and figures \ref{exfig1},  \ref{exfig2}, \ref{exfig3},  \ref{exfig4}, \ref{exfig5}, \ref{exfig6} demonstrate the behaviour of the solution for $J=1,2$. HWNA, HeWNA, HWQA and HeWQA all give numerics which are very well comparable and shows that our proposed techniques are working well.

We also observed for small changes in initial vector, for example taking $[0.1,0.1,\hdots,0.1]$ or $[0.2,0.2,\hdots,0.2]$ doesn't significantly change the solution in any case.

\subsection{Example 2 (Stellar Structure)}\label{P2_sec6b}
Consider the non-linear SBVP:
\begin{eqnarray}\label{P2_62}
Ly(t)+y^{5}(t)=0,\quad y'(0)=0,\quad y(1)=\sqrt{\frac{3}{4}},\quad k_g=2,
\end{eqnarray}
Chandrasekhar (\cite{CS1967}, p88 ) derived above two point nonlinear SBVP. This equation arise in study of stellar structure. Its exact solution is $y(t)=\sqrt{\frac{3}{3+x^2}}$.

Comparison Graphs taking initial vector [$\sqrt{\frac{3}{4}},\sqrt{\frac{3}{4}},\hdots, \sqrt{\frac{3}{4}}$] and $J=1$, $J=2$ are plotted in Figure \ref{exfig7}. Tables for solution and error are tabulated in table \ref{extab7} and table \ref{exerr1}.
\begin{table}[H]
\caption{\small{Comparison of HWNA, HeWNA, HWQA, HeWQA methods solution with analytical solution for example \ref{P2_sec6b} taking $J=2$ :}}\label{extab7}											 \centering											
\begin{center}											
\resizebox{12cm}{2cm}{											
\begin{tabular}	{|c | l|  l|  l| l| l| }										
\hline      																					
Grid Points	&	  HWNA \cite{akvdt2018}	&	HeWNA	&	  HWQA \cite{akvdt2018}	&	HeWQA	&	   Exact\\	\hline
0	&	1.00023666	&	0.999999992	&	1.00023666	&	0.999999992	&	1	\\
  1/16	&	0.999586961	&	0.99934958	&	0.999586961	&	0.99934958	&	 0.999349593	\\
  3/16	&	0.994419294	&	0.994191616	&	0.994419294	&	0.994191616	&	 0.994191626	\\
  5/16	&	0.984319576	&	0.984110835	&	0.984319576	&	0.984110835	&	 0.984110842	\\
  7/16	&	0.969730094	&	0.96954859	&	0.969730094	&	0.96954859	&	 0.969548596	\\
  9/16	&	0.9512486	&	0.951101273	&	0.9512486	&	0.951101273	&	 0.951101277	\\
 11/16	&	0.92956584	&	0.92945791	&	0.92956584	&	0.92945791	&	 0.929457914	\\
 13/16	&	0.905403371	&	0.905338132	&	0.905403371	&	0.905338132	&	 0.905338136	\\
 15/16	&	0.879460746	&	0.879439538	&	0.879460746	&	0.879439538	&	 0.879439536
	\\\hline
\end{tabular}}											
\end{center}
\end{table}
\begin{table}[H]
\caption{\small{Comparison of error of HWNA, HeWNA, HWQA, HeWQA methods for example \ref{P2_sec6b} taking $J=2$ :}}\label{exerr1}											 \centering											
\begin{center}											
	\resizebox{8cm}{0.8cm}{											
		\begin{tabular}	{|c | l|  l|  l| l| }										
			\hline      																Error	&	  HWNA \cite{akvdt2018}	&	HeWNA	&	  HWQA \cite{akvdt2018}	&	HeWQA\\ \hline
			$L_\infty$	&	0.000237368	& 2.49669$\times 10^{-9}$  	&	0.000237368	&	2.49669$\times 10^{-9}$\\
			$L_2$	&	0.000471959	&	1.97638$\times 10^{-8}$	&  0.000471959		&		1.97638$\times 10^{-8}$ \\\hline
	\end{tabular}}											
\end{center}
\end{table}
\begin{figure}[H]
\begin{center}
\includegraphics[scale=0.30]{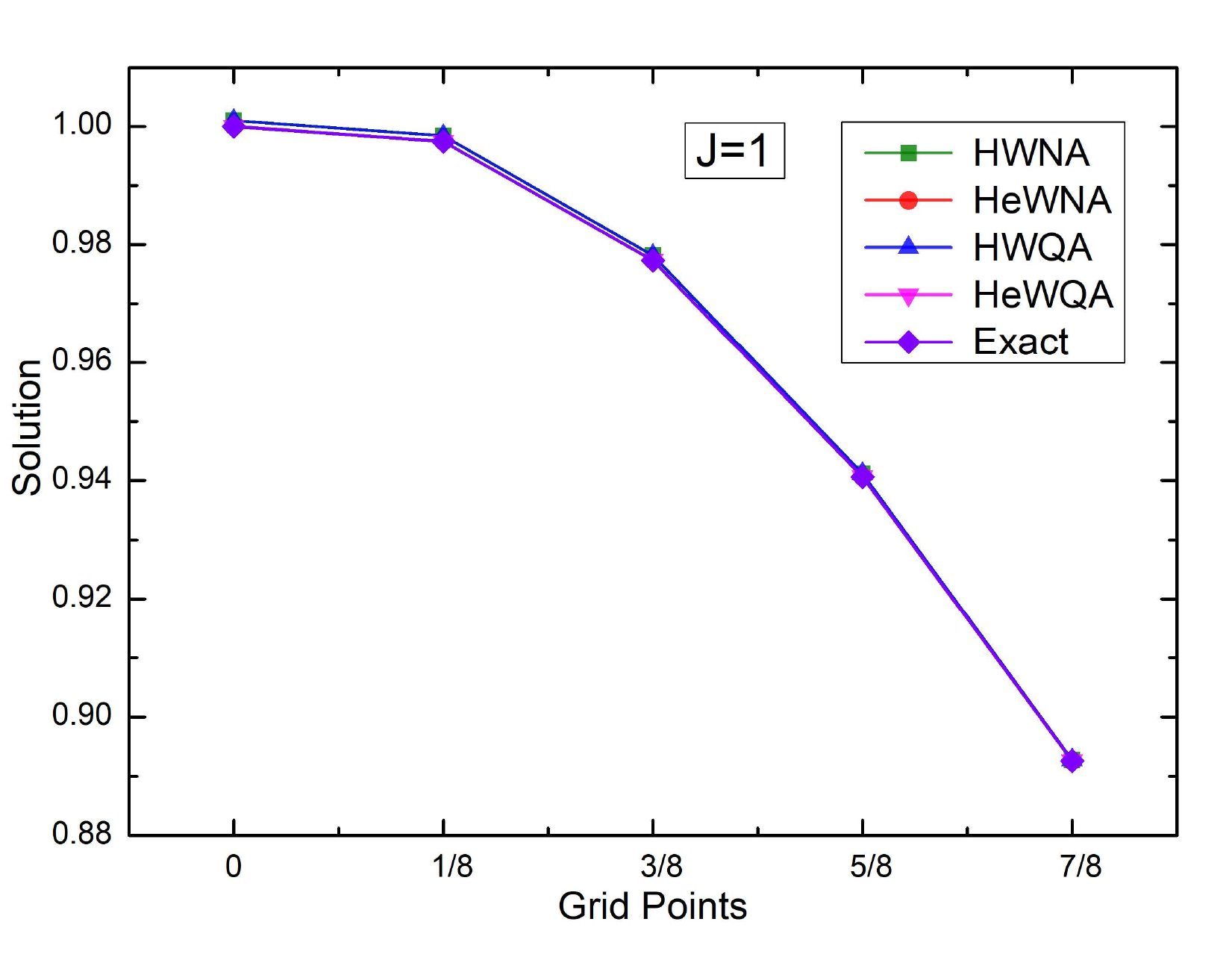}\includegraphics[scale=0.30]{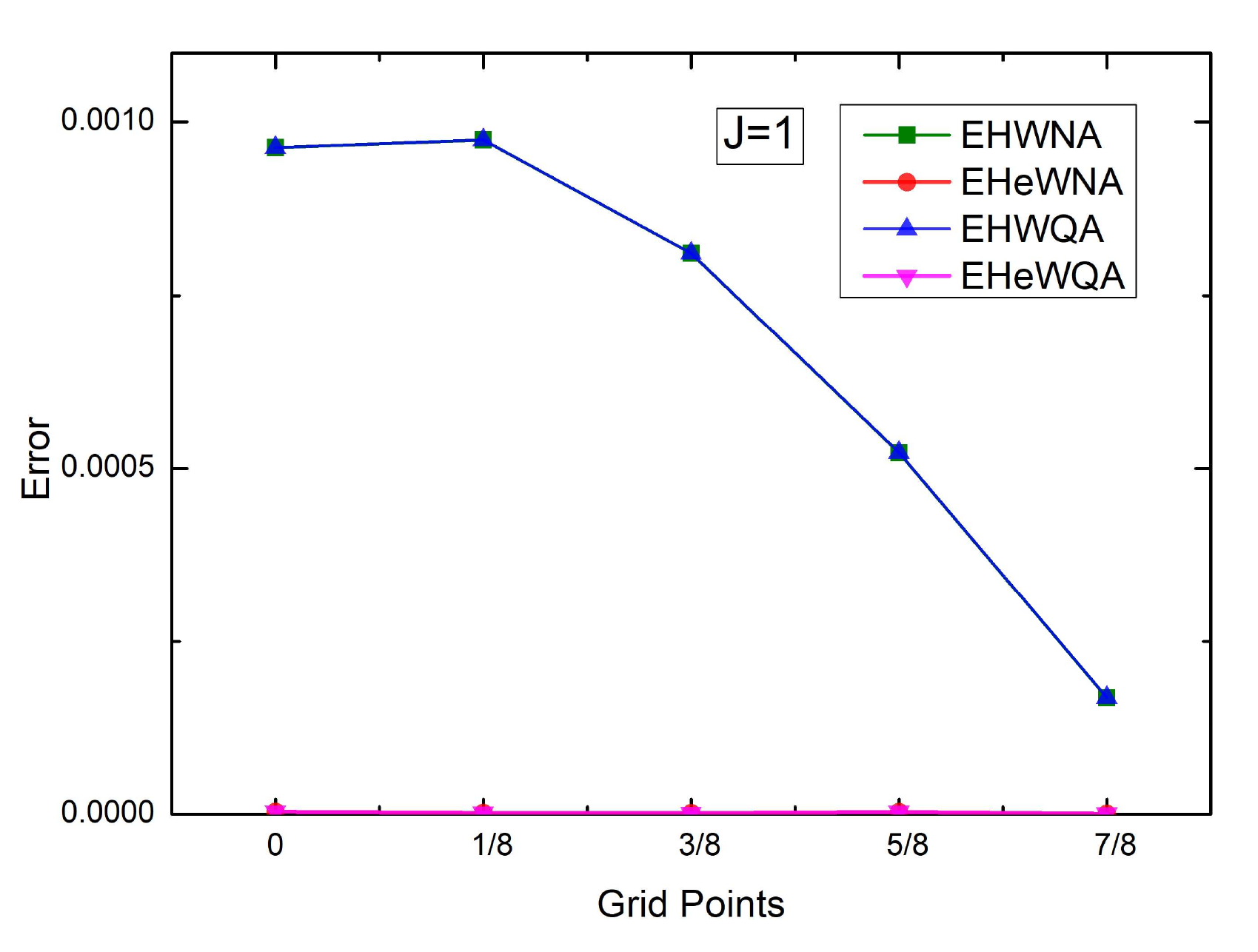}
\end{center}
\begin{center}
\includegraphics[scale=0.30]{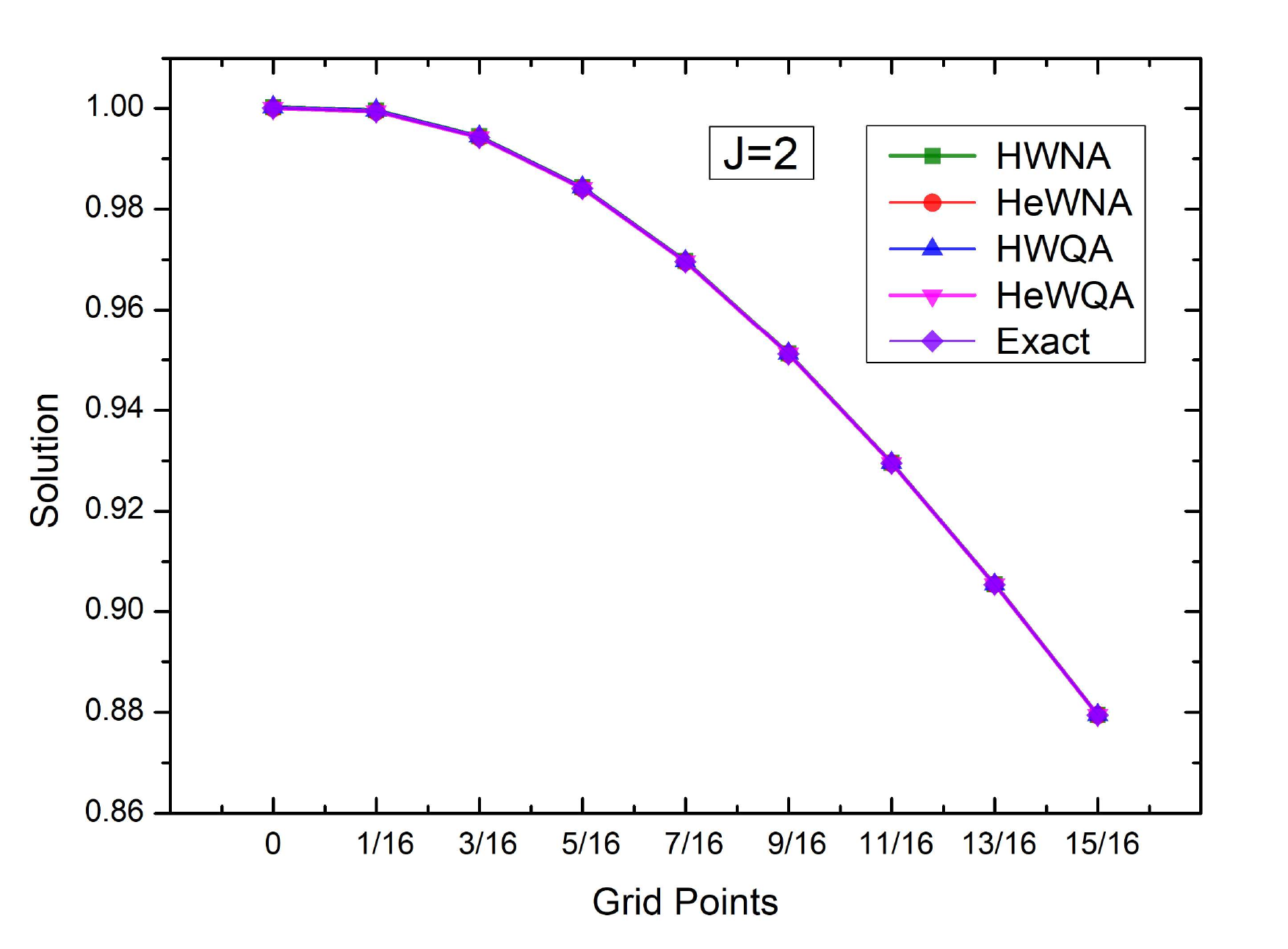}\includegraphics[scale=0.30]{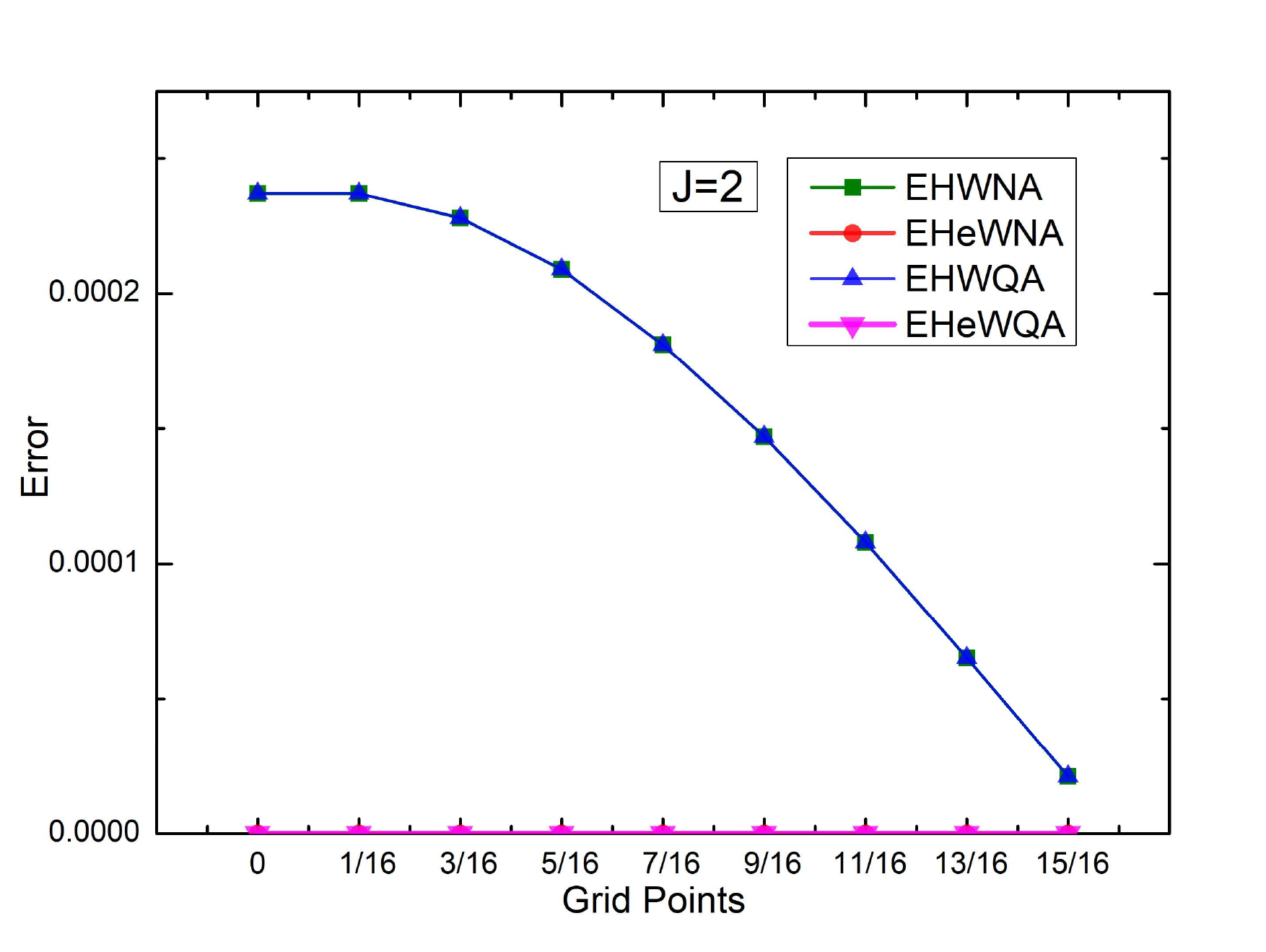}
\end{center}
\caption{Comparison plots and error plots of solution methods for $J=1,2$ for example \ref{P2_sec6b}}\label{exfig7}
\end{figure}
In this test case since exact solution of the SBVP governed by \eqref{P2_62} exists, We have compared our solutions with exact solution in table \ref{extab7} and figure \ref{exfig7}. Numerics again prove that method gives results with best accuracy for $J=1$ and $J=2$.

We also observed for small changes in initial vector, for example taking $[0.8,0.8,\hdots,0.8]$ or $[0.7,0.7,\hdots,0.7]$ doesn't significantly change the solution.

\subsection{Example 3 (Thermal Explosion)}\label{P2_sec6c}
Consider the non linear SBVP:
\begin{eqnarray}\label{P2_63}
Ly(t)+e^{y(t)}=0,\quad y'(0)=0,\quad y(1)=0, ~k_g=1.
\end{eqnarray}
Above nonlinear SBVP is derived by Chamber \cite{CHAMBRE1952}. This equation arises in the thermal explosion in cylindrical vessel. The exact solution of this equation is $y(x) = 2 \ln{\frac{4-2\sqrt{2}}{(3-2\sqrt{2})x^2+1}}$.

Comparison Graphs taking initial vector [$0,0,\hdots, 0$] and $J=1$, $J=2$ are plotted in Figure \ref{exfig8}. Tables for solution and error are tabulated in table \ref{extab8} and table \ref{exerr2}.
\begin{table}[H]
\caption{\small{Comparison of HWNA, HeWNA, HWQA, HeWQA methods solution with analytical solution for example \ref{P2_sec6c} taking $J=2$:}}\label{extab8}			 \centering											
\begin{center}											
\resizebox{12cm}{2cm}{											
\begin{tabular}	{|c | l|  l|  l| l| l| }									
\hline											
              																					
Grid Points	&	   HWNA \cite{akvdt2018}	&	  HeWNA	&	 HWQA \cite{akvdt2018}	&	HeWQA	&	Exact	\\  \hline
0	&	0.316727578	&	0.316694368	&	0.316727578	&	0.316694368	&	0.316694368	 \\
  1/16	&	0.315388914	&	0.315354403	&	0.315388914	&	0.315354403	&	 0.315354404	\\
  3/16	&	0.304700946	&	0.304666887	&	0.304700946	&	0.304666887	&	 0.304666888	\\
  5/16	&	0.283494667	&	0.283461679	&	0.283494667	&	0.283461679	&	 0.283461679	\\
  7/16	&	0.252100547	&	0.252069555	&	0.252100547	&	0.252069555	&	 0.252069555	\\
  9/16	&	0.210993138	&	0.210965461	&	0.210993138	&	0.210965461	&	 0.210965462	\\
 11/16	&	0.160768168	&	0.16074555	&	0.160768168	&	0.16074555	&	 0.16074555	\\
 13/16	&	0.102115684	&	0.102100258	&	0.102115684	&	0.102100258	&	 0.102100258	\\
 15/16	&	0.035791587	&	0.035785793	&	0.035791587	&	0.035785793	&	 0.035785793	
\\\hline
\end{tabular}}											
\end{center}											
\label{Table2}											
\end{table}
\begin{table}[H]
	\caption{\small{Comparison of error of HWNA, HeWNA, HWQA, HeWQA methods for example \ref{P2_sec6c} taking $J=2$ :}}\label{exerr2}											 \centering											
	\begin{center}											
		\resizebox{8cm}{0.8cm}{											
			\begin{tabular}	{|c | l|  l|  l| l| }										
				\hline      																Error	&	  HWNA \cite{akvdt2018}	&	HeWNA	&	  HWQA \cite{akvdt2018}	&	HeWQA\\ \hline
				$L_\infty$	&	0.0000345103	& 1.07541$\times 10^{-10}$  	&	0.0000345103	&	1.07541$\times 10^{-10}$\\
				$L_2$	&	0.0000771278	&	4.99369$\times 10^{-10}$	&  0.0000771278	&		4.99369$\times 10^{-10}$ \\\hline
		\end{tabular}}											
	\end{center}
\end{table}

\begin{figure}[H]
\begin{center}
\includegraphics[scale=0.30]{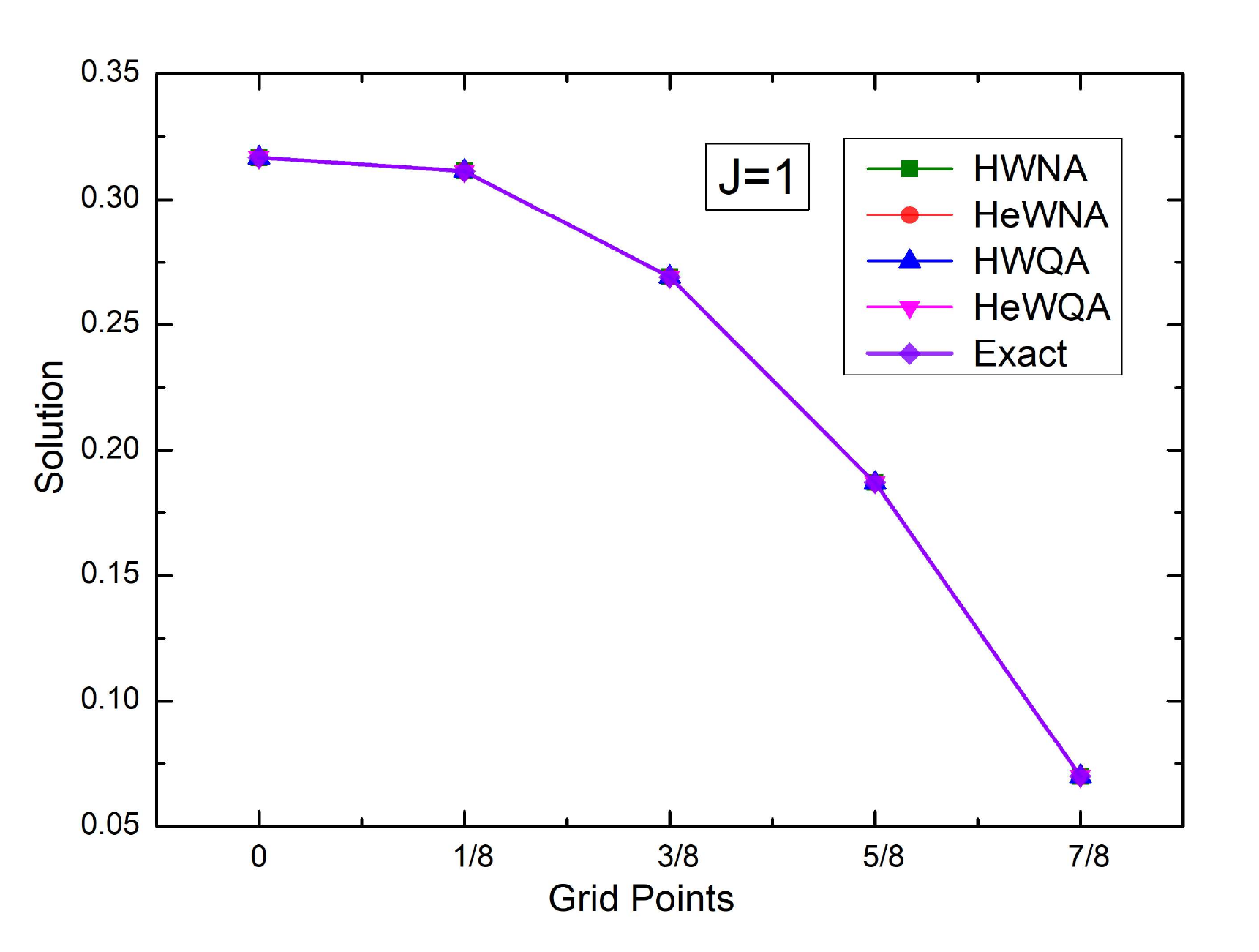}\includegraphics[scale=0.21]{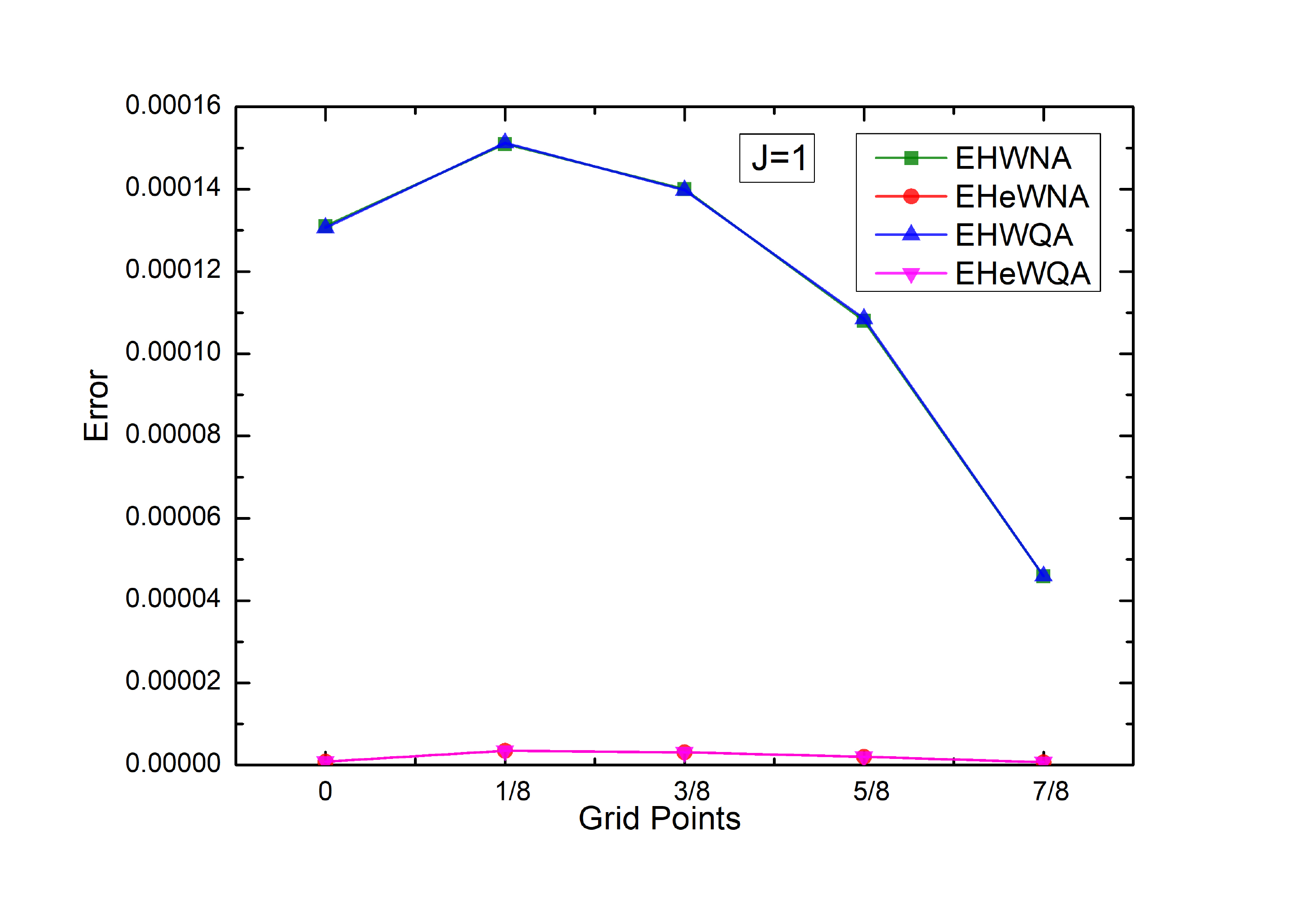}
\end{center}
\begin{center}
\includegraphics[scale=0.30]{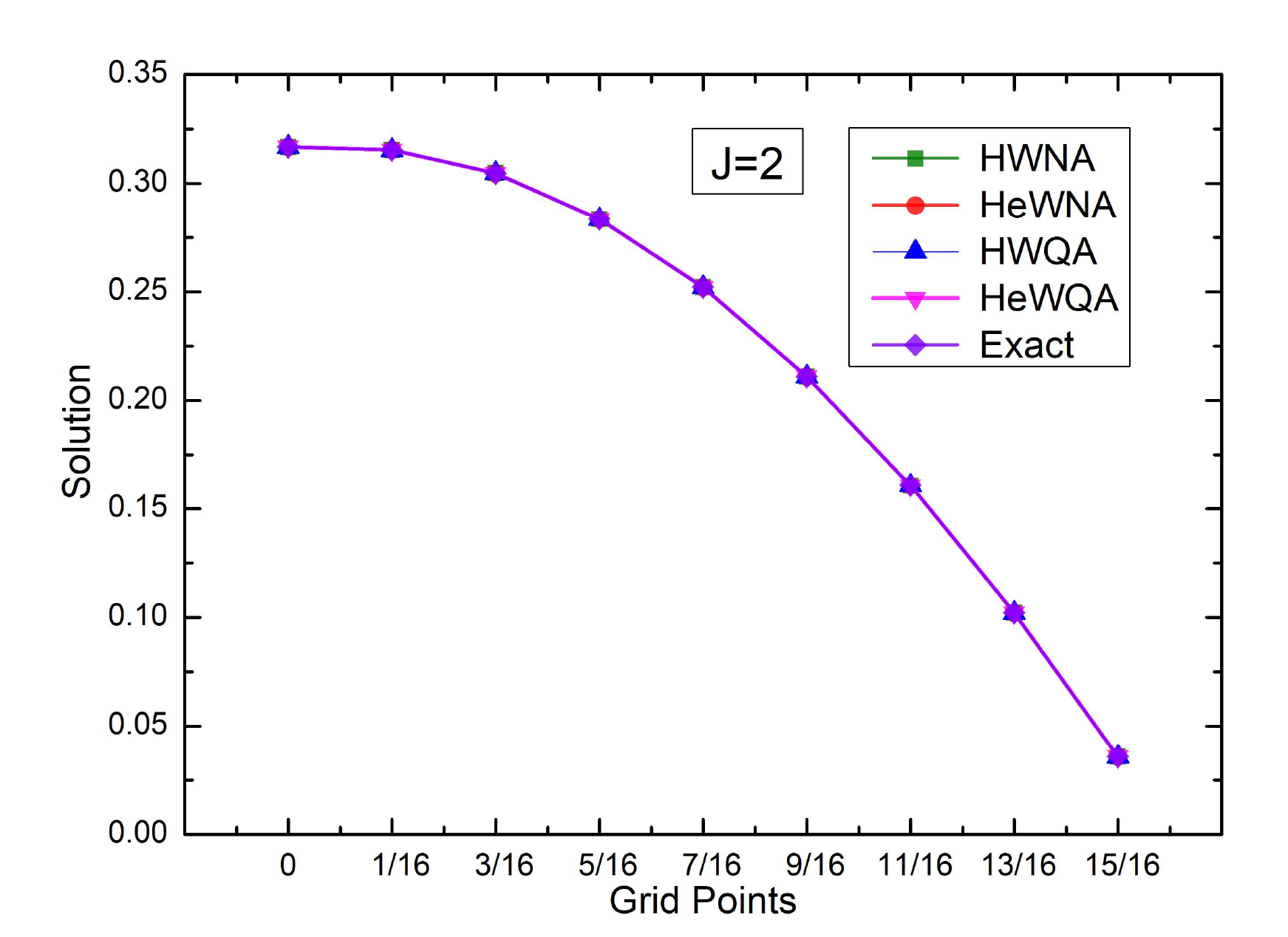}\includegraphics[scale=0.30]{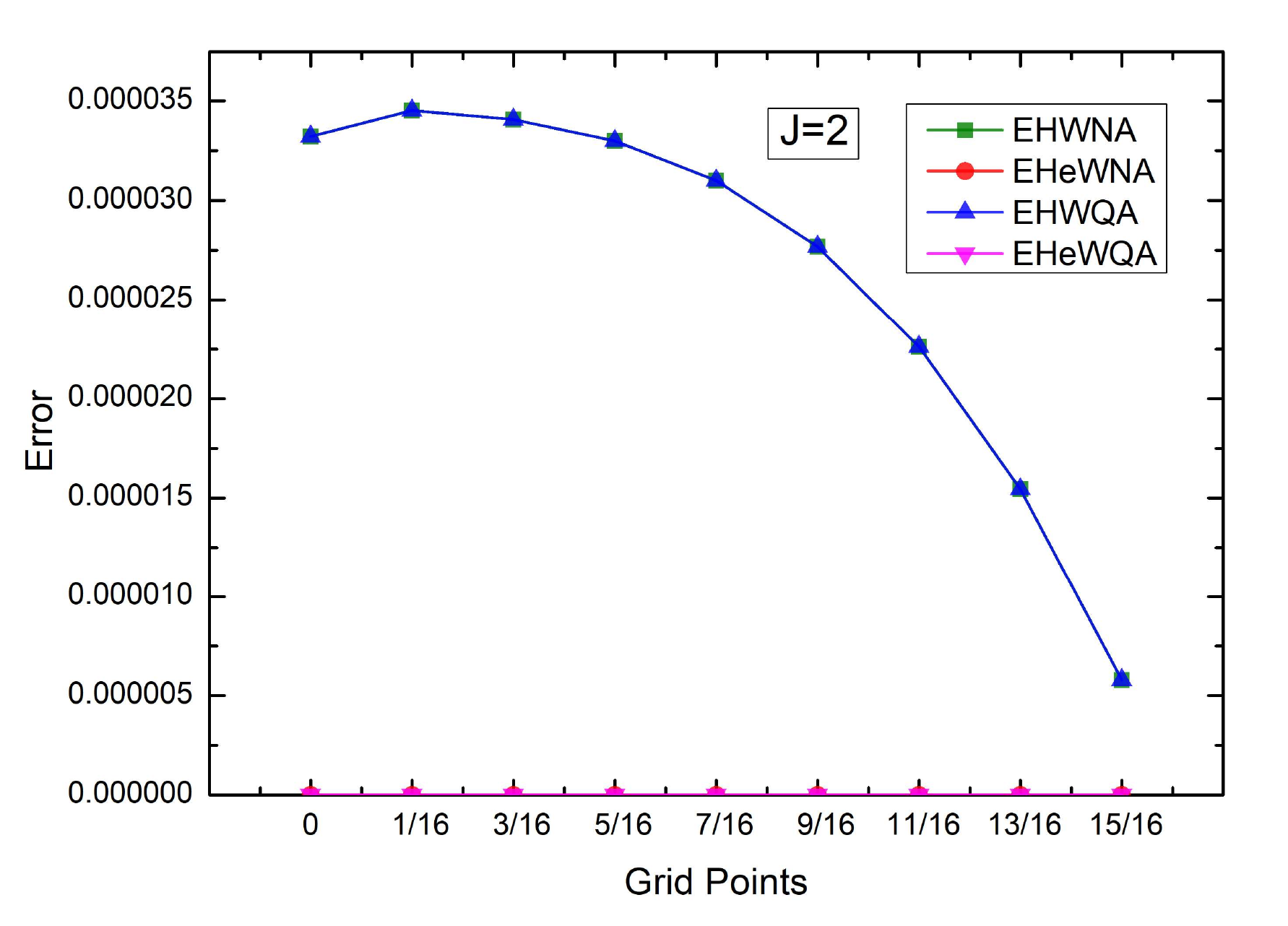}
\end{center}
\caption{Comparison plot and error plots of solution methods for $J=1,2$ for example \ref{P2_sec6c}}\label{exfig8}
\end{figure}
This is test case derived by Chambre  \cite{CHAMBRE1952} long back again exact solution is available. Table \ref{extab8} and figure \ref{exfig8} show that numerics are in good agreement with exact solutions or $J=1$ and $J=2$.

We also observed for small changes in initial vector, for example taking $[0.1,0.1,\hdots,0.1]$ or $[0.2,0.2,\hdots,0.2]$ doesn't significantly change the solution.
\subsection{Example 4 (Rotationally Symmetric Shallow Membrane Caps)}\label{P2_sec6d}
Consider the non linear SBVP:
\begin{eqnarray}\label{P2_65}
Ly(t)+\left(\frac{1}{8y^{2}(t)}-\frac{1}{2}\right)=0,\quad y'(0)=0,\quad y(1)=1,~k_g=1.
\end{eqnarray}
Above nonlinear SBVP is studied in papers \cite{RW1989,JV1998}. Exact solution of this problem is not known.

Comparison Graphs taking initial vector [$1,1,\hdots, 1$] and $J=1$, $J=2$ are plotted in Figure \ref{exfig9}. Tables for solution is tabulated in table \ref{extab9}.
\begin{table}[H]
\caption{\small{ Comparison of HWQA, HeWNA, HWQA, HeWQA method solution for example \ref{P2_sec6d} taking $J=2$:}}	\label{extab9}													 \centering	
\begin{center}	
\resizebox{10cm}{2cm}{\begin{tabular}{|c |l| l| l| l|}
\hline
Grid Points	&	HWNA \cite{akvdt2018}	&	HeWNA	&	HWQA \cite{akvdt2018}	&	HeWQA	\\	\hline
   0	&	0.954137376	&	0.954135008	&	0.954137376	&	0.954135008	\\
  1/16	&	0.954314498	&	0.954311604	&	0.954314498	&	0.954311604	\\
  3/16	&	0.95573187	&	0.95572956	&	0.95573187	&	0.95572956	\\
  5/16	&	0.958569785	&	0.958567713	&	0.958569785	&	0.958567713	\\
  7/16	&	0.962834546	&	0.962832683	&	0.962834546	&	0.962832683	\\
  9/16	&	0.968535496	&	0.968533886	&	0.968535496	&	0.968533886	\\
 11/16	&	0.975684891	&	0.975683641	&	0.975684891	&	0.975683641	\\
 13/16	&	0.984297738	&	0.984296771	&	0.984297738	&	0.984296771	\\
 15/16	&	0.994391588	&	0.994391728	&	0.994391588	&	0.994391728	
\\\hline
\end{tabular}}											
\end{center}
\end{table}
\begin{figure}[H]
\begin{center}
\includegraphics[scale=0.30]{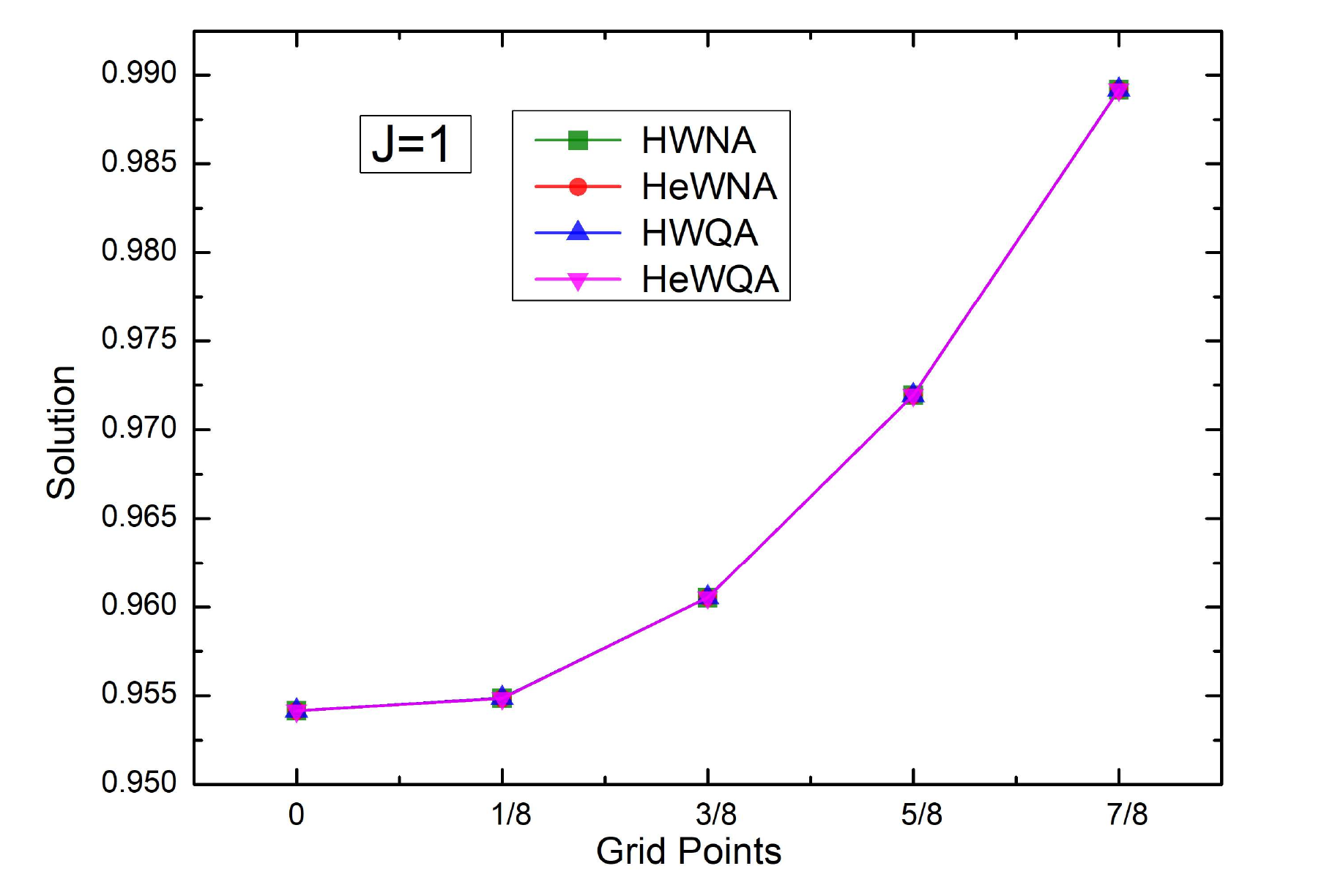}\includegraphics[scale=0.30]{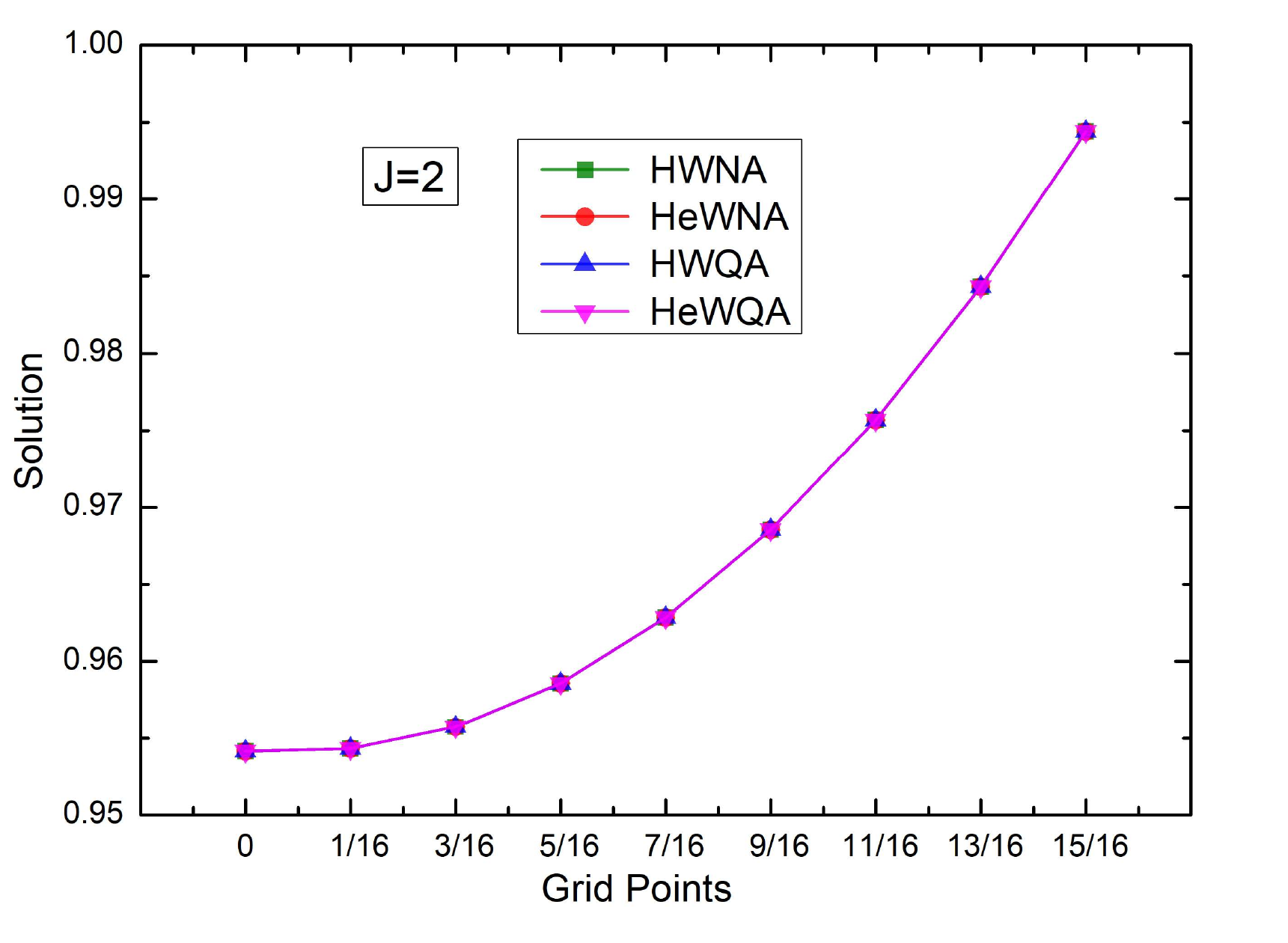}
\end{center}
\caption{Comparison plots of solution methods for $J=1,2$ for example \ref{P2_sec6d}}\label{exfig9}
\end{figure}
In this real life example again exact solution is not known so comparison is not done with exact solution. Table \ref{extab9} and figure \ref{exfig9} show that computed results are comparable for $J=1,2$.

We also observed for small changes in initial vector, for example taking $[0.9,0.9,\hdots,0.9]$ or $[0.8,0.8,\hdots,0.8]$ doesn't significantly change the solution.

\subsection{Example 5 (Thermal Distribution in Human Head)}\label{P2_sec6e}
Consider the non linear SBVP:
\begin{eqnarray}\label{P2_66}
Ly(t)+e^{-y(t)}=0,\quad y'(0)=0,\quad 2y(1)+y'(1)=0,~k_g=2.
\end{eqnarray}
This SBVP is derived by Duggan and Goodman \cite{RA1986}. Exact solution of this problem is not known to the best of our knowledge.

Comparison Graphs taking initial vector [$0,0,\hdots, 0$] and $J=1$, $J=2$ are plotted in Figure \ref{exfig10}. Tables for solution is tabulated in table \ref{extab10}.
\begin{table}[H]
\caption{\small{Comparison of HWQA, HeWNA, HWQA, HeWQA method solution for example \ref{P2_sec6e} taking $J=2$:}}\label{extab10}														 \centering											
\begin{center}	
\resizebox{10cm}{2cm}{
\begin{tabular}	{|c | l| l| l| l| }
\hline		
Grid Points	&	  HWNA \cite{akvdt2018}	&	  HeWNA	&	  HWQA \cite{akvdt2018}	&	HeWQA	\\	\hline
0	&	0.269855704	&	0.269948774	&	0.272263769	&	0.272366612	\\
  1/16	&	0.269358573	&	0.269451863	&	0.27176762	&	0.271870738	\\
  3/16	&	0.265377954	&	0.265471233	&	0.267793921	&	0.267896983	\\
  5/16	&	0.257388082	&	0.257481347	&	0.259810468	&	0.259913411	\\
  7/16	&	0.245331028	&	0.245424295	&	0.247745058	&	0.247847809	\\
  9/16	&	0.229118226	&	0.229211536	&	0.231489202	&	0.231591678	\\
 11/16	&	0.208628362	&	0.2087218	&	0.210897975	&	0.211000089	\\
 13/16	&	0.183704413	&	0.183798121	&	0.18579005	&	0.18589165	\\
 15/16	&	0.154149664	&	0.154243862	&	0.155947881	&	0.156048741	\\
1	&	0.13756259	&	0.137656718	&	0.139174003	&	0.139274111	
\\\hline
\end{tabular}}											
\end{center}											
\end{table}	
\begin{figure}[H]
\begin{center}
\includegraphics[scale=0.30]{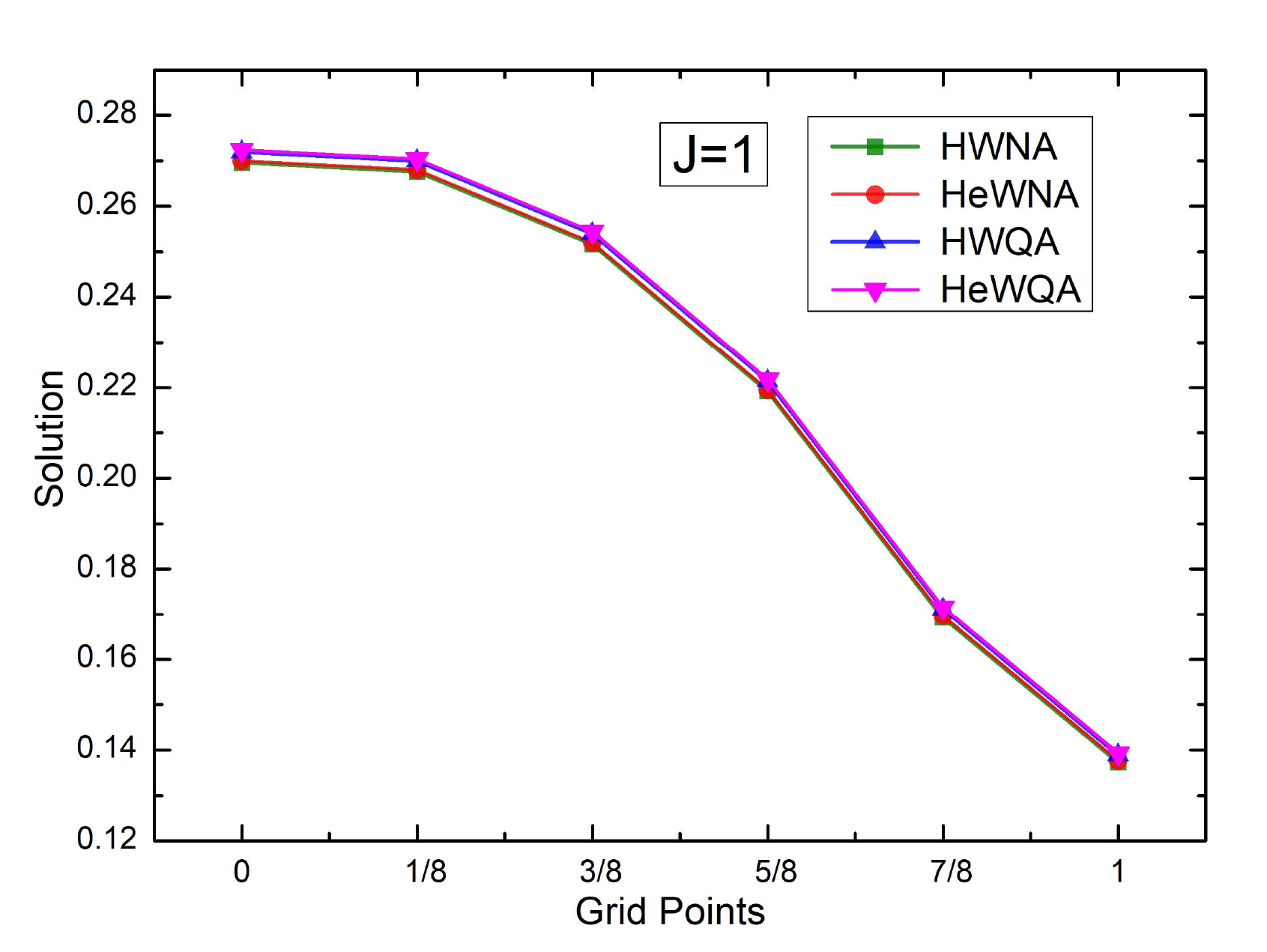}\includegraphics[scale=0.30]{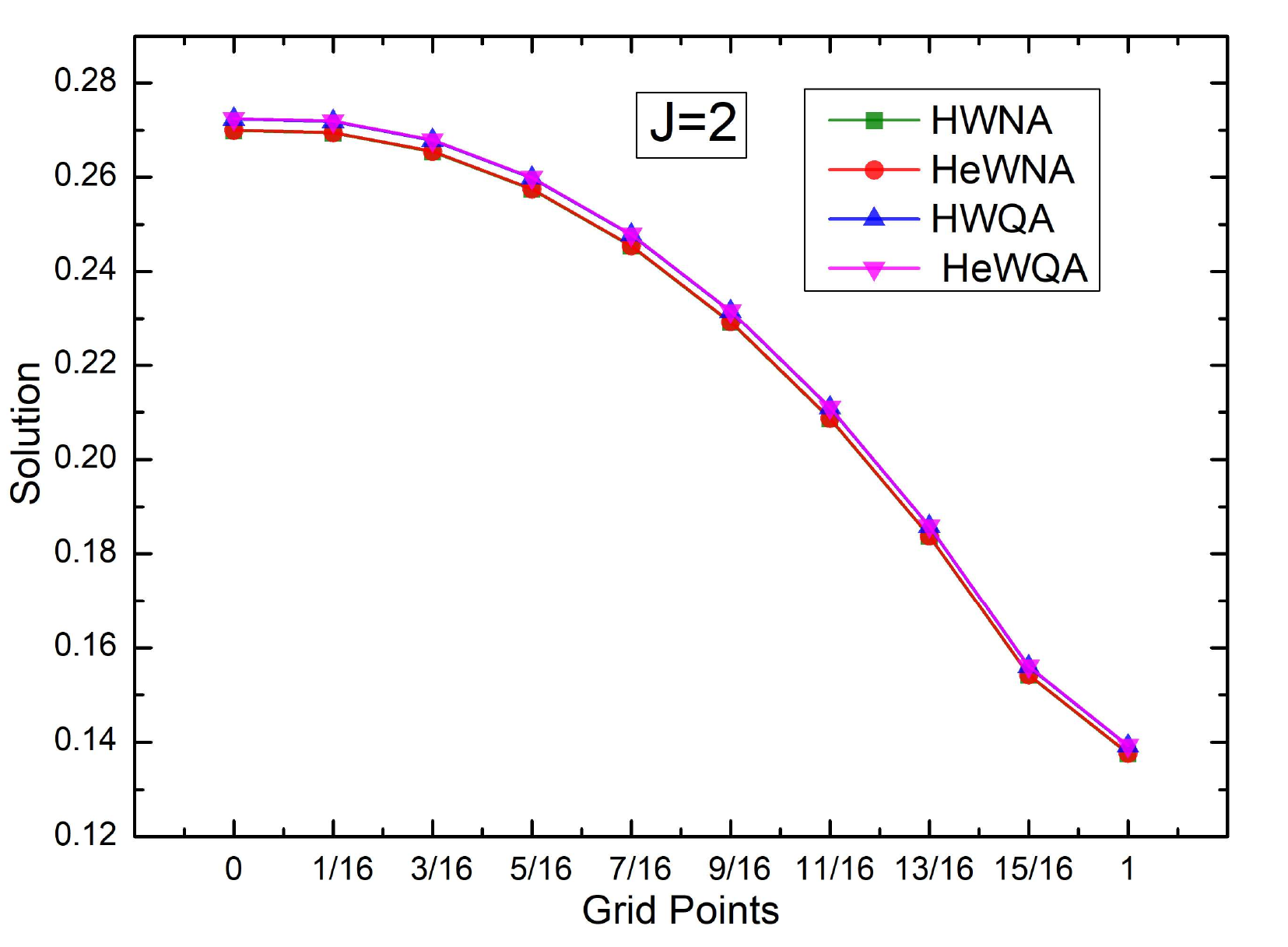}
\end{center}
\caption{Comparison plots of solution methods for $J=1,2$ for example \ref{P2_sec6e}}\label{exfig10}
\end{figure}
In absence of exact solution the comparison has not been made with exact solution. But comparison of all four methods for in the given problem due to Duggan and Goodman \cite{RA1986}, in table \ref{extab10} and figure \ref{exfig10} shows accuracy of the present method.

We also observed for small changes in initial vector, for example taking $[0.1,0.1,\hdots,0.1]$ or $[0.2,0.2,\hdots,0.2]$ doesn't significantly change the solution in any case.

\section{Conclusions}\label{P2_conclusions} In this research article, we have proposed a new model governing exothermic reactions and four different numerical methods based on wavelets, namely HWQA, HWNA, HeWQA, HeWNA for solving these nonlinear SBVPs arising in different branches of science and engineering (cf. \cite{RW1989, JV1998, RA1986, CHAMBRE1952, CS1967}). We have applied these methods in five real life examples [see equations (\ref{P2_61}), (\ref{P2_62}), (\ref{P2_63}), (\ref{P2_65}) and (\ref{P2_66})]. Singularity of differential equations can also be very well handled with help of these four proposed methods based. Difficulty arise due to non-linearity of differential equations is dealt with the help of quasilinearization in HWQA and HeWQA method. In the other two proposed method, HWNA and HeWNA, we will solve the resulting non-linear system with help of Newton-Raphson method. Boundary conditions are also handled well by the proposed methods. Main advantage of proposed methods is that solutions with high accuracy are obtained using a few iterations. We also observe that small perturbation in initial vector does not significantly change the solution. Which shows that our method is numerically stable.

Our convergence analysis shows that that $||E_{k,M}||$ tends to zero as $M$ tends to infinite. Which shows that accuracy of solution increases as $J$ increases.

Computational work illustrate the validity and accuracy of the procedure. Our computations are based on a higher resolution and codes developed can easily be used for even further resolutions.
\bibliography{MasterR}
\end{document}